\def\deg{{\rm deg}}
\newtheorem{theorem}{{\bf Theorem}}
\newtheorem{remark}{{\bf Remark}}
\newtheorem{corollary}[theorem]{{\bf Corollary}}
\newtheorem{proposition}[theorem]{{\bf Proposition}}
\newtheorem{lemma}[theorem]{{\bf Lemma}}
\def\bfb{{\boldsymbol{b}}}
\def\bfp{{\boldsymbol{p}}}
\def\bfq{{\boldsymbol{q}}}
\def\bfx{{\boldsymbol{x}}}
\def\bfy{{\boldsymbol{y}}}
\def\bfQ{{\boldsymbol{Q}}}
\def\bfA{{\boldsymbol{A}}}
\def\RR{\mathbb{R}}
\begin{document}

\begin{frontmatter}
\title{Affine equivalences, isometries and symmetries of ruled rational surfaces.}


\author[a]{Juan Gerardo Alc\'azar\fnref{proy,proy2}}
\ead{juange.alcazar@uah.es}
\author[a]{Emily Quintero\fnref{proy3}}
\ead{emily.quintero@edu.uah.es}

\address[a]{Departamento de F\'{\i}sica y Matem\'aticas, Universidad de Alcal\'a,
E-28871 Madrid, Spain}


\fntext[proy]{Supported by the Spanish ``Ministerio de
 Econom\'ia y Competitividad" under the Project MTM2014-54141-P.}

\fntext[proy2]{Member of the Research Group {\sc asynacs} (Ref. {\sc ccee2011/r34}) }

\fntext[proy3]{Supported by a grant from the Carolina Foundation.}


\begin{abstract}
We present a method for computing all the affine equivalences between two rational ruled surfaces defined by rational parametrizations that works directly in parametric rational form, i.e. without computing or making use of the implicit equation of the surface. The method proceeds by translating the problem into the parameter space, and relies on polynomial system solving. From a geometric point of view, an important observation is the fact that the problem is related to finding the projective equivalences between two projective curves (corresponding to the directions of the rulings of the surfaces). This problem was recently addressed by Hauer and J\"uttler in \cite{HJ18}, and the ideas by these authors are greatly exploited in the algorithm presented in this paper. The general idea is adapted to compute the isometries between two rational ruled surfaces, and the symmetries of a given rational ruled surface. The efficiency of the method is shown through several examples. 
\end{abstract}

 \end{frontmatter}

\section{Introduction}\label{section-introduction}

Two given surfaces are \emph{affinely equivalent} when one of them is the result of applying a nonsingular affine transformation to the other one. Any nonsingular affine transformation is a global diffeomorphism, and therefore the transformation preserves both the topology and the differential properties of the surfaces. One can say that in this case the surfaces are smooth deformations of each other. Recognizing affine equivalence is important in fields like Computer Vision or Pattern Recognition, where one often has objects stored in a database, and needs to recognize those objects up to some transformation. 

While affine transformations do not, in general, preserve metric properties, \emph{isometries} do. Isometries, which are symmetries of 3-space, are well classified \cite{Coxeter}, and comprise translations, central symmetries, reflections in a plane, rotational symmetries (with axial symmetries as a special case), and their composites. A \emph{symmetry} of a surface is a symmetry of 3-space that leaves the surface invariant. In particular, symmetries of 3-space are orthogonal transformations. Thus, two surfaces are isometric when one of them is the result of applying a rigid motion to the other one.

Additionally, knowing the symmetries of a surface is useful in order to understand the geometry of the surface and to visualize the surface correctly. It is also useful in applications like image storage and medial axis computations, or, again, object detection and recognition. In the literature of applied fields like Computer Aided Geometric Design, Pattern Recognition or Computer Vision one can find many methods to detect symmetries (see for instance the Introduction to \cite{AH15}), although these methods are usually applied to objects where no specific structure is assumed, and are more orientented towards finding approximate symmetries. 

The same thing can be said about methods to identify affine equivalences; see for example the paper \cite{Shahed} and the references provided therein. In fact, in applications the problem which has received more attention is the detection of affine equivalences between point clouds, since images and objects are often considered this way. 

In contrast, in this paper we address a type of surfaces with a strong structure, namely rational ruled algebraic surfaces, and we make use of the structure of the surfaces in order to compute affine equivalences, isometries or symmetries. Ruled surfaces consist of straight lines, and are classical in Differential and Algebraic Geometry. A complete account of many properties of these surfaces is given, for instance, in the books \cite{G06} and \cite{Pottmann}. 

Some recent publications address similar problems for curves and surfaces, too. Projective and affine equivalences between rational curves in arbitrary dimension are considered in \cite{HJ18}. The same problem for rational and polynomial surfaces is considered in \cite{HJ18-2}. However, in \cite{HJ18-2} the parametrizations of the surfaces involved are assumed to have no projective base points, which is not the case, in general, for rational ruled surfaces. Projective equivalences between some special varieties are also studied in \cite{BLV18}. In \cite{BLV18} ruled surfaces, among others, are considered, and an algorithm to compute projective equivalences, based on a strong background of Algebraic Geometry, is provided. Nevertheless, the authors of \cite{BLV18} are not very specific about computational questions or timings, so it is not easy to compare their methods with the one in this paper. 

A naive approach to solve the problem could be to compute first the implicit equations of the surfaces, which can be efficiently done by using $\mu$-bases \cite{Chen1, Chen2}, to pick a generic affine transformation, and finally to impose that the transformation maps one surface onto the other, which is equivalent to recovering, up to a constant, the implicit equation of the second surface. This approach leads to a polynomial system with 12 variables, the parameters of the affine transformation, where the degree of each polynomial equation is bounded by the degree of the implicit equation. However, this approach is often impractical. On one hand, the system has many variables, which makes it complicated to solve even when the polynomial equations have low total degree. On the other hand, computing the implicit equation may destroy sparsity, when present in the parametrization. The same criticism applies to computing isometries or symmetries by this method. 

In this paper we use a different approach, based on previous work of one of the authors on symmetries of rational curves \cite{AHM15} and polynomially parametrized surfaces \cite{AH15}. The idea is to reduce the problem to computations in the parameter space, an idea also present in \cite{HJ18} or \cite{HJ18-2}. In order to do this, we observe that whenever the parametrization defining the surfaces are \emph{proper}, i.e. birational, any affine equivalence of the surfaces is associated with a birational transformation of the parameter space (the plane), i.e. a Cremona transformation. Taking advantage of the structure of the surfaces, in our case of the fact that the surfaces are ruled, we prove that the corresponding Cremona transformation has a predictable form. 

This predictable form reveals something else, namely the fact that the matrix associated with an affine equivalence corresponds to a projective equivalence between the projective curves defined by the directions of the rulings of the surfaces. From this point, we take advantage of the detailed analysis of the computation of projective equivalences between curves carried out in \cite{HJ18}. In particular, in our case the main difficulty, from a computational point of view, is the solution of a polynomial system in the parameters of the Cremona transformation whose structure is studied in \cite{HJ18}; the core of the computational method we propose to find the affine equivalences of the surfaces is based on ideas in \cite{HJ18}.

For isometries or symmetries, the fact that the transformation we are seeking preserves metric properties provides extra conditions that help reduce the computation time. 
 
The structure of this paper is the following. We start with a preliminary section, Section \ref{gen-surf}, where we fix the hypotheses required on the input, together with some general notions to be used later in this paper. Several results behind the method, and the analysis of the structure of the Cremona transformation behind affine equivalences, are presented in Section \ref{sec-symmetries}. The computational method itself is addressed in Section \ref{sec-computation}, where we apply many results in \cite{HJ18}. 
We report on experiments in Section \ref{sec-exp}. Some brief observations on projective equivalences are provided in Section \ref{sec-projective}. A brief summary of our work is presented in Section \ref{sec-conclusion}.

\vspace{0.2 cm}
\noindent\noindent{\bf Acknowledgements.} Juan G. Alc\'azar is supported by the Spanish Ministerio de Econom\'{\i}a y Competitividad and by the European Regional Development Fund (ERDF), under the project  MTM2017-88796-P. Emily Quintero is supported by a grant from the Carolina Foundation. The authors are grateful to the reviewers, whose comments allowed to improve an earlier version of the paper. Particularly, one of the reviewers pointed out the similarities with the paper \cite{HJ18}, and greatly helped to improve the computation strategy and the timings, compared to those of an initial version of the paper, with very precise suggestions.

\section{Preliminaries.} \label{gen-surf}

Let $S_1,S_2$ be real ruled surfaces, defined by means of real, rational parametrizations $\bfx_1,\bfx_2$ in standard form \cite{SPD14}
\begin{equation}\label{surf}
\pmb{x}_i(t,s)=\pmb{p}_i(t)+s\cdotp\pmb{q}_i(t).
\end{equation}
An algorithm for writing a ruled rational surface in standard form is provided in \cite{SPD14}; in \cite{SPD14} it is shown that any ruled rational surface can be brought into an equation like Eq. \eqref{surf}, although the parametrization might not be real (e.g. quadrics). At each point $P=\pmb{x}_i(t,s)\in S_i$, $i=1,2$, the vector $\pmb{q}_i(t)$ defines the direction of the \emph{ruling} through $P$, i.e. the line $L_P$ through $P$ contained in $S_i$.  

Furthermore, we will also suppose that $S$ is not doubly-ruled, i.e. that there are not two different families of rulings contained in $S$. It is well-known that the doubly-ruled surfaces are the plane, the hyperbolic paraboloid, and the single-sheeted hyperboloid (see \cite[\S I.3]{HC90}). For paraboloids and hyperboloids one can study affine equivalences by first computing the implicit equation, which is easy to do in the case of quadrics, and then applying matrix methods. 

In the rest of this paper we will assume that, for $i=1,2$, $\bfx_i(t,s)$ is \emph{proper}, i.e. that the parametrization in Eq. \eqref{surf} is injective except at most at a 1-dimensional subset of $S_i$; in particular, this implies that $\bfx_i^{-1}$ exists and is rational. Additionally, we need some more assumptions on Eq. \eqref{surf}. First, and this assumption will be important in order to develop our results, we will suppose that $\bfq_i(t)$ is polynomially parametrized, so that the components of $\bfq_i(t)$ have no denominators. We will also suppose that the (polynomial) components of $\bfq_i(t)$ are relatively prime, i.e. writing $\bfq_i(t)=(q_{1,i}(t),q_{2,i}(t),q_{3,i}(t))$, we will assume that $\gcd(q_{1,i}(t),q_{2,i}(t),q_{3,i}(t))=1$. 

Let us see that we can always achieve the two last requirements ($\bfq_i(t)$ polynomial, with components relatively prime), so that the above assumptions can be considered as completely general. Indeed, if some of these assumptions do not hold, then we can replace 
\begin{equation} \label{replace}
\bfq_i(t):=\mu_i(t) \bfq_i(t),\mbox{ }\mu_i(t)=\frac{\mu_{1,i}(t)}{\mu_{2,i}(t)},
\end{equation}
where $\mu_{1,i}(t)$ is the least common multiple of the denominators of the components of $\bfq_i(t)$, and $\mu_{2,i}(t)$ is the greatest common divisor of the numerators of the components of $\bfq_i(t)$. Notice that since $\mu_i(t) \bfq_i(t)$ is parallel to $\bfq_i(t)$ for all $t$, the new parametrization $\widehat{\bfx}_i(t,s)=\bfp_i(t)+s \mu_i(t)\bfq_i(t)$ also defines the surface $S_i$, because the rulings of the surfaces defined by $\bfx_i(t,s)$ and $\widehat{\bfx}_i(t,s)$ coincide. Furthermore, we can perform this substitution without losing properness, as shown by the next lemma. 

\begin{lemma}\label{prop-new}
Let $S$ be a ruled surface, and let $\bfx(t,s)$ be a proper parametrization of $S$ defined by Eq. \eqref{surf}. Let $\widehat{\bfx}(t,s)=\bfp(t)+s \mu(t)\bfq(t)$, where $\mu(t)$ is defined as in Eq. \eqref{replace}. Then $\widehat{\pmb{x}}(t,s)$ is also proper.
\end{lemma}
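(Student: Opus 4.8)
The plan is to exhibit $\widehat{\bfx}$ as the composition of the proper parametrization $\bfx$ with a birational self-map of the parameter plane, and then to use the fact that properness (generic injectivity) is preserved under such a composition. The starting point is the elementary identity
\begin{equation*}
\widehat{\bfx}(t,s)=\bfp(t)+s\,\mu(t)\,\bfq(t)=\bfx\big(t,\,s\,\mu(t)\big),
\end{equation*}
which shows that $\widehat{\bfx}=\bfx\circ\phi$, where $\phi$ is the rational map of the $(t,s)$-plane given by $\phi(t,s)=(t,\,s\,\mu(t))$.

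Next I would verify that $\phi$ is birational. By its construction in Eq.~\eqref{replace}, $\mu(t)=\mu_{1}(t)/\mu_{2}(t)$ is a quotient of two nonzero polynomials, hence a nonzero rational function of $t$. Consequently the rational map $\psi(t,u)=(t,\,u/\mu(t))$ is a two-sided inverse of $\phi$ on the locus where both are defined, so $\phi$ restricts to a bijection between two Zariski-dense open subsets of the plane; that is, $\phi$ is a birational transformation. Since $\bfx$ is proper, it is birational onto $S$, and the composition of birational maps is birational; therefore $\widehat{\bfx}=\bfx\circ\phi$ is birational onto $S$, i.e.\ proper. Equivalently, and more concretely, one can argue by generic injectivity: if $\widehat{\bfx}(t_{1},s_{1})=\widehat{\bfx}(t_{2},s_{2})$ for a generic point of $S$, then $\bfx(t_{1},s_{1}\mu(t_{1}))=\bfx(t_{2},s_{2}\mu(t_{2}))$, whence properness of $\bfx$ forces $t_{1}=t_{2}$ and $s_{1}\mu(t_{1})=s_{2}\mu(t_{1})$, and cancelling the generically nonzero factor $\mu(t_{1})$ yields $s_{1}=s_{2}$.

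The step requiring the most care — and the only genuine obstacle — is the control of the exceptional locus. The function $\mu(t)$ vanishes or is undefined only at the finitely many roots of $\mu_{1}(t)$ and $\mu_{2}(t)$, and along the fibres over these finitely many $t$-values the maps $\phi$, $\psi$ may fail to be defined or invertible. I would argue that each such fibre is at most a line in the $(t,s)$-plane, so their union is a one-dimensional set; its image, together with the (at most one-dimensional) locus where $\bfx$ itself fails to be injective, is contained in a one-dimensional subset of $S$. Outside this subset the cancellation of $\mu(t)$ above is legitimate and the injectivity argument goes through, which is exactly what the definition of properness in Eq.~\eqref{surf} demands. The remaining verifications — that $\mu_1,\mu_2$ are genuinely nonzero and that the birational identities hold as claimed — are routine.
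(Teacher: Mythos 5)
Your proposal is correct and, at its core, is the same argument as the paper's: the paper also observes that a point $\widehat{\bfx}(t,s)$ equals $\bfx(t,s\mu(t))$, restricts to generic points where $\mu(t)\neq 0$, invokes properness of $\bfx$ to force $t_1=t_2$ and $s_1\mu(t_1)=s_2\mu(t_2)$, and cancels $\mu(t_1)$ to conclude $s_1=s_2$ (phrased there as a proof by contradiction). Your additional packaging of this as the composition $\bfx\circ\phi$ with the birational map $\phi(t,s)=(t,s\mu(t))$ is a harmless reformulation of the same idea, so no substantive difference remains.
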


\begin{proof} Suppose that $\widehat{\pmb{x}}(t,s)$ is not proper. Then a generic point $P$ of $S$ is generated via $\widehat{\pmb{x}}(t,s)$ by two different pairs $(t_1,s_1)\neq(t_2,s_2)$; furthermore, since $P$ is generic we can assume that $\mu(t_1)\cdot \mu(t_2)\neq 0$. In this situation, $P$ is generated via $\bfx(t,s)$ by the pairs $(t_1,\tilde{s}_1)$, $(t_2,\tilde{s}_2)$ where $\tilde{s}_i=\mu(t_i)\cdot s_i$, $i=1,2$. Since $\bfx(t,s)$ is proper by hypothesis, we deduce that $t_1=t_2$, $\tilde{s}_1=\tilde{s}_2$, in which case $\mu(t_1)=\mu(t_2)$ too. And since $\tilde{s}_1=\tilde{s}_2$ and $\mu(t_i)\neq 0$, we conclude that $s_1=s_2$, contradicting that the pairs $(t_1,s_1)$ and $(t_2,s_2)$ are different.
\end{proof}

We say that $S_1,S_2$ are \emph{affinely equivalent} if there exists a nonsingular affine mapping $f:\RR^3\longrightarrow \RR^3$, 
\begin{equation}\label{eq:affine}
f({\bf x}) = \bfA {\bf x} + \bfb, \qquad {\bf x}\in \RR^3,
\end{equation}
with $\bfb \in \RR^3$ and $A\in \RR^{3\times 3}$ a nonsingular square matrix or order 3, such that $f(S_1)=S_2$. We say that $f$ is an \emph{affine equivalence} between $S_1,S_2$. If $\bfA$ is an orthogonal matrix, i.e. $\bfA^T \bfA=I$, where $I$ denotes the $3\times3$ identity matrix, we say that $f$ defines an \emph{isometry} between $S_1,S_2$, and that $S_1,S_2$ are \emph{isometric}. If $\bfA=\lambda \bfQ$ where $\bfQ$ is orthogonal and $\lambda\neq 0$, we say that $f$ defines a \emph{similarity} between $S_1,S_2$, and that $S_1,S_2$ are \emph{similar}. Furthermore, if $S_1=S_2$ and $f$ defines a non-trivial isometry of $S$ onto itself, we say that $f$ is a \emph{symmetry} of $S$, and that $S$ is \emph{symmetric}. 

Finally, we say that $f:\RR^n\longrightarrow \RR^n$ is an \emph{involution} if $f\circ f=\mbox{id}_{{\RR}^n}$. Involutions are particularly interesting when $S_1=S_2=S$ and we consider the symmetries of $S$, since notable symmetries like reflections on a plane, axial symmetries (i.e. symmetries with respect to a line, or equivalently rotations about a line of angle $\pi$) and central symmetries (i.e. symmetries with respect to a point) are involutions. Rotational symmetries, i.e. rotations leaving $S$ invariant, with rotation angle different from $\pi$, however, are not involutions.

\section{Affine equivalences of ruled surfaces.}\label{sec-symmetries}

Let $S_1,S_2$ be real rational ruled surfaces parametrized by $\bfx_1,\bfx_2$ as in Eq. \eqref{surf}, satisfying the conditions of Section \ref{gen-surf}, i.e. for $i=1,2$ we assume that: $\bfx_i(t,s)$ is proper; $\bfq_i(t)$ is polynomial with relatively prime components; $S_i$ is not doubly-ruled. Our goal in this section is to present a method to detect whether or not $S_1,S_2$ are affinely equivalent, and to compute the affine equivalences between $S_1,S_2$ in the affirmative case. In order to develop our method we will assume that $S_1,S_2$ are not cylindrical, so that $\pmb{q}_i(t)$, $i=1,2$, are not constant. Nevertheless, we will address cylindrical surfaces in Subsection \ref{subsec-cyl}. Notice that one can efficiently detect whether or not a rational surface is cylindrical by using the results in \cite{AG17}. 

The following result is crucial for us. 

\begin{theorem} \label{th-fundam}
 Let $S_1,S_2$ be two rational real ruled surfaces properly parametrized by $\bfx_1,\bfx_2$ as in Eq. \eqref{surf}. A mapping $f:{\RR}^3\to {\RR}^3$, $f({\bf x})=\bfA {\bf x}+\bfb$, with $\bfA\in {\Bbb R}^{3\times 3}$, $\bfb\in {\Bbb R}^3$ and $\bfA$ nonsingular, satisfies that $f(S_1)=S_2$, so that $S_1,S_2$ are affinely equivalent, if and only if there exists a birational transformation $\varphi:{\RR}^2\to {\RR}^2$, such that the diagram 
	\begin{equation}\label{eq:fundamentaldiagram}
	 \xymatrix{
		S_1 \ar[r]^{f} & S_2 \\
		\RR^2 \ar@{-->}[u]^{\pmb{x}_1} \ar@{-->}[r]_{\varphi} & \RR^2 \ar@{-->}[u]_{\pmb{x}_2}
		}
	\end{equation}
	is commutative. In particular, for a generic point $(t,s)\in \RR^2$
	\begin{equation}\label{fundam-eq}  f\circ \bfx_1=\bfx_2\circ \varphi.
	\end{equation}
\end{theorem}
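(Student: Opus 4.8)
The plan is to prove the two implications separately, with the key idea being that properness of both parametrizations makes the surface map $\bfx_i$ invertible (as a rational map), which is what lets us define $\varphi$ canonically from $f$.

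First I would establish the forward direction ($\Rightarrow$). Assume $f(S_1)=S_2$ with $f$ nonsingular affine. Since $\bfx_1$ is proper, its inverse rational map $\bfx_1^{-1}:S_1\dashrightarrow\RR^2$ exists (this is guaranteed by the hypotheses recalled in Section \ref{gen-surf}), and likewise $\bfx_2^{-1}$ exists. The natural candidate is then
\begin{equation*}
\varphi := \bfx_2^{-1}\circ f\circ \bfx_1 : \RR^2 \dashrightarrow \RR^2.
\end{equation*}
I would check that this is well-defined as a rational map on a dense open set: $\bfx_1$ sends a generic $(t,s)$ to a generic point $P\in S_1$, then $f$ sends $P$ to $f(P)\in f(S_1)=S_2$, and finally $\bfx_2^{-1}$ pulls $f(P)$ back into the parameter plane. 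Each arrow is rational and dominant, so the composite $\varphi$ is rational; its inverse is $\bfx_1^{-1}\circ f^{-1}\circ \bfx_2$ (using that $f$ is invertible, being nonsingular, and that $f^{-1}(S_2)=S_1$), so $\varphi$ is birational. Commutativity of the diagram, i.e. Eq. \eqref{fundam-eq}, is then immediate by construction: $\bfx_2\circ\varphi = \bfx_2\circ\bfx_2^{-1}\circ f\circ\bfx_1 = f\circ\bfx_1$ as rational maps, since $\bfx_2\circ\bfx_2^{-1}$ is the identity on a dense subset of $S_2$.

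For the converse ($\Leftarrow$), I would assume a birational $\varphi$ exists making the diagram commute, so that $f\circ\bfx_1 = \bfx_2\circ\varphi$ holds for generic $(t,s)$. The goal is to conclude $f(S_1)=S_2$ as sets. Taking images: $f(\bfx_1(\RR^2))$ agrees with $\bfx_2(\varphi(\RR^2))$ on the image of the generic locus. Since $\bfx_1$ is a parametrization, $\bfx_1(\RR^2)$ is Zariski-dense in $S_1$, hence $f(\bfx_1(\RR^2))$ is dense in $f(S_1)$; since $\varphi$ is birational (in particular dominant), $\varphi(\RR^2)$ is dense in $\RR^2$, so $\bfx_2(\varphi(\RR^2))$ is dense in $S_2$. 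The equality on the generic locus together with the fact that $f$ is a homeomorphism (affine, nonsingular) and $S_1,S_2$ are the Zariski closures of these images forces $f(S_1)=S_2$.

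\textbf{The main obstacle} I anticipate is not the algebraic manipulation but the careful handling of the domains of definition: all the maps involved are only rational, defined off proper closed subsets (base points of $\bfx_i$, the indeterminacy locus of $\varphi$, the non-injectivity locus permitted by properness). The delicate point is to argue that the various "generic" open dense sets on which the individual compositions are defined have a common dense open refinement, so that Eq. \eqref{fundam-eq} genuinely holds on a dense set and the set-theoretic conclusion $f(S_1)=S_2$ follows by taking Zariski closures. I would handle this by working throughout with rational maps up to dense-open-set equivalence and invoking that a nonsingular affine map is a global isomorphism of $\RR^3$, so it commutes cleanly with the operation of taking Zariski closure of the parametrized images.
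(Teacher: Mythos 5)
Your proposal is correct and follows essentially the same route as the paper: in the forward direction you define $\varphi=\bfx_2^{-1}\circ f\circ \bfx_1$ and observe that it is birational as a composition of birational maps, exactly as the paper does. The only (inessential) difference is how the converse is closed: you upgrade $f(S_1)\subseteq S_2$ to equality by a Zariski-closure/density argument using the dominance of $\varphi$, whereas the paper instead notes that $f(S_1)$ is an irreducible surface contained in the irreducible surface $S_2$ and concludes equality from irreducibility --- both finishes are standard and serve the same purpose.
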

																																									
\begin{proof}
``$\Rightarrow$" Since $\bfx_2$ is proper by hypothesis, $\bfx_2^{-1}$ exists and is rational. Therefore, $\varphi=\bfx_2^{-1}\circ f \circ \bfx_1$ is birational, because $\varphi$ is the composition of birational transformations. ``$\Leftarrow$" Since $f\circ \pmb{x}_1=\pmb{x}_2\circ \varphi$, whenever $\bfx_1(t,s)$ and $(\bfx_2\circ \varphi)(t,s)$ are well-defined $(f\circ \bfx_1)(t,s)\in S_2$, so $f(S_1)\subset S_2$. Since $f$ is nonsingular, $f(S_1)$ defines a rational surface, i.e. $f(S_1)$ does not degenerate into a curve. Additionally both $f(S_1),S_2$ are rational, and therefore irreducible; since $f(S_1)\subset S_2$ and $f(S_1),S_2$ are irreducible, $f(S_1)=S_2$, i.e. $S_1,S_2$ are affinely equivalent.
\end{proof}

Additionally, from Eq. \eqref{fundam-eq} one can easily see that each affine mapping $f$ is associated with a different $\varphi$. 

From Theorem \ref{th-fundam} we observe that $\varphi$ is a birational transformation of the plane. Such a transformation is called a \emph{Cremona transformation}. However, unlike the birational transformations of the line, which are the well-known 
\emph{M\"obius transformations}, i.e. the transformations of the type
\begin{equation}\label{eq:Moebius}
\psi: {\RR} \dashrightarrow {\RR}, \qquad \psi(t) = \frac{\alpha t + \beta}{\gamma t + \delta}, \qquad \alpha \delta - \beta \gamma \neq 0,
\end{equation}
Cremona transformations do not have a generic closed form. Therefore, in order to describe $\varphi$, we need to make use of the properties of the surfaces we are investigating, in this case of the fact that they are ruled. The following result provides a first clue in this direction.

\begin{proposition}\label{fund}
Let $S_1,S_2$ be rational ruled surfaces properly parametrized as in Eq. \eqref{surf}, which are not doubly ruled. Let $f({\bf x})=\bfA {\bf x}+\bfb$ be a nonsingular affine mapping satisfying $f(S_1)=S_2$, and let $\varphi:\RR^2\to \RR^2$ be the birational transformation making the diagram in Eq. \eqref{eq:fundamentaldiagram} commutative. Then
	\begin{equation}\label{phi-funct}
	\varphi(t,s)=(\psi(t),a(t)\cdotp s+ c(t)),
	\end{equation}
	where $\psi(t)$ is a M\"obius transformation and $a(t),c(t)$ are rational functions.
\end{proposition}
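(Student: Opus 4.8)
The plan is to read off the structure of $\varphi$ directly from the commutativity relation $f\circ\bfx_1=\bfx_2\circ\varphi$ of Theorem~\ref{th-fundam}, using the hypothesis that $S_2$ is not doubly ruled in an essential way. First I would write $\varphi(t,s)=(\varphi_1(t,s),\varphi_2(t,s))$ and expand both sides of Eq.~\eqref{fundam-eq}. The left-hand side is $f(\bfp_1(t)+s\bfq_1(t))=\bfA\bfp_1(t)+\bfb+s\,\bfA\bfq_1(t)$, which for each fixed $t$ is the affine parametrization of a straight line with direction $\bfA\bfq_1(t)$; geometrically it is the image under $f$ of the ruling $L_t$ of $S_1$. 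Since $f$ is affine it carries lines to lines, and this image line lies in $f(S_1)=S_2$.

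The crucial step is the observation that, as $t$ varies, the lines $f(L_t)$ form a one-parameter family of lines contained in $S_2$. Because $S_2$ is not doubly ruled, its only one-parameter family of lines is the family of its own rulings; hence for generic $t$ the line $f(L_t)$ must coincide with a ruling of $S_2$, say $\{\bfx_2(\psi(t),\sigma):\sigma\in\RR\}$ for a uniquely determined parameter value $\psi(t)$. In terms of $\varphi$ this forces the first component to be independent of $s$: as $s$ varies along $L_t$ the image point stays on one and the same ruling of $S_2$, so $\varphi_1(t,s)=\psi(t)$. To see that $\psi$ is a M\"obius transformation, I would note that $\psi$ is a rational function of $t$ (it is the first component of the rational map $\varphi$, and is constant in $s$), and that by properness generic rulings of $S_1$ and $S_2$ are in bijection with generic values of the first parameter; applying the same reasoning to $f^{-1}$, which satisfies $f^{-1}(S_2)=S_1$, produces the first component of $\varphi^{-1}$, which is the rational inverse of $\psi$. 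A birational self-map of the line is a M\"obius transformation, giving the claim.

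Finally, to obtain the form of $\varphi_2$, I would match the two parametrizations of the single line $f(L_t)=\{\bfx_2(\psi(t),\sigma)\}$. Comparing directions, and using that $\bfA\bfq_1(t)\neq 0$ and $\bfq_2(\psi(t))\neq 0$ generically (the surfaces are not cylindrical), yields a scalar $a(t)$ with $\bfA\bfq_1(t)=a(t)\,\bfq_2(\psi(t))$; comparing the image of the base point gives a scalar $c(t)$ with $\bfA\bfp_1(t)+\bfb=\bfp_2(\psi(t))+c(t)\,\bfq_2(\psi(t))$. Substituting into Eq.~\eqref{fundam-eq} and collecting the coefficient of the (nonzero) direction vector $\bfq_2(\psi(t))$ then gives $\varphi_2(t,s)=a(t)\,s+c(t)$. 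Rationality of $a$ and $c$ follows immediately from rationality of $\varphi_2$ by specialization: $c(t)=\varphi_2(t,0)$ and $a(t)=\varphi_2(t,1)-\varphi_2(t,0)$.

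The main obstacle I anticipate is making the crucial step fully rigorous, that is, justifying that $\varphi_1$ depends only on $t$. This rests squarely on the not-doubly-ruled hypothesis: one must argue at the level of one-parameter families of lines (rather than individual lines, where finitely many exceptional lines in $S_2$ could otherwise intrude) and then restrict to generic $t$ so that base points of the parametrizations, vanishing of $\bfq_i$, and the possible degeneration of $\bfA\bfq_1(t)$ do not interfere with the identification of $f(L_t)$ as a ruling of $S_2$. Once this geometric point is secured, the remaining computations are routine.
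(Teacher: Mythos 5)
Your argument follows the same route as the paper's proof: the not-doubly-ruled hypothesis forces $f$ to carry the rulings of $S_1$ onto rulings of $S_2$, hence $\varphi_1$ is independent of $s$ and, being birational on the line, is a M\"obius transformation, after which the linear-in-$s$ form of $\varphi_2$ drops out of $f\circ\bfx_1=\bfx_2\circ\varphi$ with $f$ affine. You merely spell out two points the paper leaves terse (the one-parameter-family justification for why $f(L_t)$ is a ruling, and the explicit direction/base-point comparison yielding $a(t)$ and $c(t)$), so the proposal is correct and essentially identical in approach.
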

 
\begin{proof} Since $f$ is an affine mapping, $f$ maps rulings of $S_1$ onto rulings of $S_2$. Let $\varphi(t,s)=(\varphi_1(t,s),\varphi_2(t,s))$. A generic ruling of $S_i$, with $i=1,2$ is defined by 
$\pmb{x}_i(t_{a_i},s)$, where $t_{a_i}$ is a constant. Since $S_2$ is not doubly ruled, the ruling parametrized by $\pmb{x}_1(t_{a_1},s)$ is mapped by $f$ onto the ruling parametrized by $\pmb{x}_2(t_{a_2},s)$. Using Eq. \eqref{fundam-eq}, we get 
$$f(\bfx_1(t_{a_1},s))=\bfx_2(\varphi(t_{a_1},s))=\bfx_2(\varphi_1(t_{a_1},s),\varphi_2(t_{a_1},s)),$$so $\varphi_1(t_{a_1},s)=t_{a_2}$, i.e. $\varphi_1(t_{a_1},s)$ does not depend on $s$. Since this independence happens for a generic $t_{a_1}$, we deduce that $\varphi_1(t,s)=\varphi_1(t)$. Since $\varphi$ is birational, $\varphi_1$ is birational as well; in particular, we deduce that $\varphi_1$ is a birational transformation of the line, so $\varphi_1$ must be a M\"obius transformation, which we represent by $\psi(t)$. The rest of the theorem follows from Eq. \eqref{fundam-eq}, taking into account that $f({\bf x})=\bfA {\bf x}+\bfb$. 
\end{proof}

Let us now investigate the structure of the function $a(t)$ in Eq. \eqref{phi-funct}. Recall that $\pmb{x}_i(t,s)=\bfp_i(t)+s\cdotp\pmb{q}_i(t)$, where $\bfq_i(t)=(q_{i,1}(t),q_{i,2}(t),q_{i,3}(t))$, each $q_{i,j}(t)$ is polynomial and $\gcd(q_{i,1},q_{i,2},q_{i,3})=1$. Also, let 

\begin{equation}\label{eq-n}
n_i=\mbox{max}\{\deg(q_{i,1}(t)),\deg(q_{i,2}(t)),\deg(q_{i,3}(t))\},
\end{equation}

\noindent and let us write \[a(t)=\dfrac{A(t)}{B(t)},\mbox{ }\psi(t) = \dfrac{\alpha t + \beta}{\gamma t + \delta},\]where $A,B\in\RR[t]$, $\gcd(A,B)=1$, and $\alpha \delta - \beta \gamma \neq 0$.  Combining Eq. \eqref{phi-funct} and Eq. \eqref{fundam-eq} with $f({\bf x})=\bfA {\bf x}+\bfb$, and comparing the coefficients of $s$, we get
\begin{equation}\label{at}
		\bfA\cdotp\bfq_1(t)=a(t)\cdotp\bfq_2(\psi(t)).
	\end{equation}
Since $\bfq_i(t)$, $i=1,2$, is polynomial, the left hand-side of Eq. \eqref{at} is polynomial as well, so the right hand-side of Eq. \eqref{at} must also be polynomial. This observation yields the following results; here, we denote the entries of the matrix $\bfA$ by $\bfA_{ij}$. 

\begin{lemma}\label{At}
	$(\gamma t + \delta)^{n_2}$ divides $A(t)$.
\end{lemma}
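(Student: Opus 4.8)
The plan is to analyze Eq.~\eqref{at}, namely $\bfA\cdotp\bfq_1(t)=a(t)\cdotp\bfq_2(\psi(t))$, by clearing the denominators that $\psi$ introduces and tracking which factors of the resulting denominator must be cancelled. First I would substitute $\psi(t)=\frac{\alpha t+\beta}{\gamma t+\delta}$ and $a(t)=\frac{A(t)}{B(t)}$ explicitly into the right-hand side. Since each component $q_{2,j}$ is a polynomial of degree at most $n_2$, evaluating $q_{2,j}$ at $\psi(t)$ and writing the result over a common denominator produces $(\gamma t+\delta)^{n_2}$ in the denominator: more precisely, $(\gamma t+\delta)^{n_2}\,q_{2,j}(\psi(t))$ is a polynomial in $t$ for each $j$. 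I would set $\widetilde{q}_{2,j}(t):=(\gamma t+\delta)^{n_2}q_{2,j}(\psi(t))\in\RR[t]$ and record that, because $n_2$ is the maximum of the degrees, at least one $\widetilde{q}_{2,j}$ has a nonzero constant term with respect to $(\gamma t+\delta)$, i.e.\ not all of the $\widetilde{q}_{2,j}$ are divisible by $(\gamma t+\delta)$.

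With this notation, multiplying Eq.~\eqref{at} through by $(\gamma t+\delta)^{n_2}$ gives, component-wise,
\begin{equation}
(\gamma t+\delta)^{n_2}\,\big(\bfA\cdotp\bfq_1(t)\big)_j = a(t)\,\widetilde{q}_{2,j}(t) = \frac{A(t)}{B(t)}\,\widetilde{q}_{2,j}(t),\qquad j=1,2,3.
\end{equation}
The left-hand side is a polynomial, so $B(t)$ must divide $A(t)\,\widetilde{q}_{2,j}(t)$ for every $j$; since $\gcd(A,B)=1$, this forces $B(t)\mid \widetilde{q}_{2,j}(t)$ for all $j$. The key structural input I would now invoke is that $\gcd(q_{2,1},q_{2,2},q_{2,3})=1$, which should translate into $\gcd(\widetilde{q}_{2,1},\widetilde{q}_{2,2},\widetilde{q}_{2,3})$ being a power of $(\gamma t+\delta)$ (the only common factor a M\"obius substitution can introduce). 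Combined with the observation above that not every $\widetilde{q}_{2,j}$ is divisible by $(\gamma t+\delta)$, this gives $\gcd(\widetilde{q}_{2,1},\widetilde{q}_{2,2},\widetilde{q}_{2,3})=1$, whence $B(t)$ is a nonzero constant. Rescaling, I may take $B\equiv 1$, so $a(t)=A(t)$ is itself a polynomial.

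Having reduced to $a(t)=A(t)\in\RR[t]$, I would return to $(\gamma t+\delta)^{n_2}\big(\bfA\cdotp\bfq_1(t)\big)_j = A(t)\,\widetilde{q}_{2,j}(t)$ and read off the divisibility in the other direction: the left-hand side is divisible by $(\gamma t+\delta)^{n_2}$, so $(\gamma t+\delta)^{n_2}$ divides $A(t)\,\widetilde{q}_{2,j}(t)$ for each $j$. Choosing the index $j_0$ for which $\widetilde{q}_{2,j_0}$ is \emph{not} divisible by $(\gamma t+\delta)$, and using that $(\gamma t+\delta)$ is irreducible (a linear polynomial), coprimality gives $(\gamma t+\delta)^{n_2}\mid A(t)$, which is exactly the claim.

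I expect the main obstacle to be the middle step: rigorously justifying that the only new common factor $\widetilde{q}_{2,j}(t)=(\gamma t+\delta)^{n_2}q_{2,j}(\psi(t))$ can acquire across the three components is a power of $(\gamma t+\delta)$, so that the coprimality of the original $q_{2,j}$ is preserved after the substitution. The clean way to handle this is to note that $t\mapsto\psi(t)$ is an automorphism of $\RR(t)$, hence it cannot create spurious common roots away from the pole $t=-\delta/\gamma$ of $\psi$; any common factor of the $\widetilde{q}_{2,j}$ other than $(\gamma t+\delta)$ would pull back to a common factor of the $q_{2,j}$, contradicting $\gcd(q_{2,1},q_{2,2},q_{2,3})=1$. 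Making this argument airtight (and keeping careful track of degrees to guarantee the existence of the non-divisible index $j_0$) is where the real content lies; the divisibility bookkeeping on either side is routine once that is in place.
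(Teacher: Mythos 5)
Your proposal is correct and follows essentially the same route as the paper: clear the denominator $(\gamma t+\delta)^{n_2}$ introduced by $\psi$, observe that for a component of top degree $n_2$ the resulting numerator is coprime to $\gamma t+\delta$ (because $\gamma t+\delta \nmid \alpha t+\beta$), and read off the divisibility. The only difference is that you also establish along the way that $B(t)$ is constant via the pulled-back common root argument, which the paper defers to its separate Lemma~\ref{Bt} using exactly that argument.
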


\begin{proof}
	From Eq. \eqref{at}, for $i=1,2,3$ we get 
	\begin{equation}\label{est}
	\bfA_{i1}\cdotp q_{1,1}(t)+\bfA_{i2}\cdotp q_{1,2}(t)+\bfA_{i3}\cdotp q_{1,3}(t)=a(t)\cdotp q_{2,i}(\psi(t)),
	\end{equation} where $q_{2,i}(t)=a_{\ell_i} t^{\ell_i}+a_{\ell_i-1}t^{\ell_i-1}+\cdots+a_0,$ with $\ell_i\leq n_2$ for $i\in \{1,2,3\}$. Furthermore, $\ell_i=n_2$ for at least one $i\in \{1,2,3\}$. Additionally, 
\begin{equation}\label{q2i}	
	q_{2,i}(\psi(t))=\dfrac{a_{\ell_i}(\alpha t+\beta)^{\ell_i}+a_{\ell_i-1}(\alpha t+\beta)^{\ell_i-1}(\gamma t+\delta)+\cdots+a_0(\gamma t+\delta)^{\ell_i}}{(\gamma t+\delta)^{\ell_i}}.
\end{equation}	
	Since $\gamma t +\delta$ does not divide $\alpha t+\beta$, the numerator and denominator of $q_{2,i}(\psi(t))$ are relatively prime. Since the left hand-side of Eq. \eqref{est} is a polynomial, $a(t)\cdotp q_{2,i}(\psi(t))$ must be a polynomial as well, so $(\gamma t+\beta)^{\ell_i}$ divides $A(t)$. Since $\ell_i=n_2$ for some $i\in \{1,2,3\}$, the statement follows. 
\end{proof}

\begin{lemma}\label{Bt}
	$B(t)$ is a constant.
\end{lemma}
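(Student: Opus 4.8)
The plan is to exploit the componentwise form of Eq. \eqref{at} recorded in Eq. \eqref{est}. For each $i\in\{1,2,3\}$ I would write the right-hand side as
\[
a(t)\cdot q_{2,i}(\psi(t))=\frac{A(t)}{B(t)}\cdot\frac{N_i(t)}{(\gamma t+\delta)^{\ell_i}},
\]
where $N_i(t)$ is the numerator displayed in Eq. \eqref{q2i}, so that $\gcd(N_i,\gamma t+\delta)=1$. Since the left-hand side of Eq. \eqref{est} is a polynomial, the product $A(t)N_i(t)/\bigl(B(t)(\gamma t+\delta)^{\ell_i}\bigr)$ must be a polynomial as well. First I would invoke Lemma \ref{At} to write $A(t)=(\gamma t+\delta)^{n_2}\tilde{A}(t)$; as $\ell_i\le n_2$, the factor $(\gamma t+\delta)^{\ell_i}$ in the denominator cancels, leaving $(\gamma t+\delta)^{n_2-\ell_i}\tilde{A}(t)N_i(t)/B(t)$ a polynomial. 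Because $\gcd(A,B)=1$, the polynomial $B$ is coprime both to $\tilde{A}$ and to $\gamma t+\delta$, and hence $B(t)$ must divide $N_i(t)$. As this holds for every $i$, I conclude that $B(t)\mid\gcd(N_1,N_2,N_3)$.

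The heart of the argument is then to show that $\gcd(N_1,N_2,N_3)$ is a nonzero constant, which I would establish by ruling out any irreducible factor $c(t)$. If $c$ were a scalar multiple of $\gamma t+\delta$, then evaluating $N_i$ at $t=-\delta/\gamma$ would give $a_{\ell_i}(\beta\gamma-\alpha\delta)^{\ell_i}/\gamma^{\ell_i}$, which is nonzero since $a_{\ell_i}\neq 0$ and $\alpha\delta-\beta\gamma\neq 0$; so $\gamma t+\delta$ is not a common factor. Otherwise $c$ has a root $t_0$ with $\gamma t_0+\delta\neq 0$, and $N_i(t_0)=(\gamma t_0+\delta)^{\ell_i}\,q_{2,i}(\psi(t_0))=0$ then forces $q_{2,i}(\psi(t_0))=0$ for all $i$. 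Thus the finite value $\psi(t_0)$ would be a common root of $q_{2,1},q_{2,2},q_{2,3}$, contradicting the standing hypothesis $\gcd(q_{2,1},q_{2,2},q_{2,3})=1$. Hence $\gcd(N_1,N_2,N_3)$ admits no irreducible factor and is constant, forcing $B(t)$ to be constant, as claimed.

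The step I expect to be the main obstacle is the bookkeeping in the second paragraph, where the degrees $\ell_i$ vary from component to component: I must homogenize the three expressions consistently so that a common factor of the $N_i$ genuinely pulls back, under $\psi$, to a common root of the $q_{2,i}$, and I must isolate the pole $t=-\delta/\gamma$ of $\psi$ as a separate case (the degenerate situation $\gamma=0$ makes $\gamma t+\delta$ a nonzero constant and is then immediate). By contrast, the divisibility reduction $B\mid N_i$ in the first paragraph should follow cleanly once Lemma \ref{At} and the coprimality $\gcd(A,B)=1$ are in place.
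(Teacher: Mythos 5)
Your proof is correct and follows essentially the same route as the paper: establish $B(t)\mid\gcd(N_1,N_2,N_3)$ and then show this gcd is constant by pulling a common root of the $N_i$ back through $\psi$ to a common root of $q_{2,1},q_{2,2},q_{2,3}$, contradicting $\gcd(q_{2,1},q_{2,2},q_{2,3})=1$. Your extra care with the pole $t=-\delta/\gamma$ and with the divisibility bookkeeping via Lemma \ref{At} only makes explicit what the paper's shorter argument leaves implicit.
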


\begin{proof}
Let $N_i(t)$ be the numerator of $q_{2,i}(\psi(t))$, and recall that $\gcd(q_{2,1},q_{2,2},q_{2,3})=1$. Since the left hand-side of Eq. \eqref{est} is a polynomial, $B(t)|N_i(t)$ for $i=1,2,3$. Thus, $B(t)|G(t)$, where $G=\gcd(N_1,N_2,N_3)$. Now suppose that $G(t)$ is not constant. Then $N_1,N_2,N_3$ have a common root $t_0$. Moreover, since the numerators and denominators of the $q_{2,i}(\psi(t))$ are relative prime, $\gamma t_0+\delta\neq 0$. Therefore, $\psi(t_0)$ is well defined and $\psi(t_0)$ is a common root of the $q_{2,i}(t)$, because $q_{2,i}(\psi(t_0))=\frac{N_i(t_0)}{(\gamma t_0+\delta)^{\ell_i}}$. But this contradicts the fact that $\gcd(q_{2,1},q_{2,2},q_{2,3})=1$. Thus, $G(t)$ is constant and since $B(t)|G(t)$, $B(t)$ must be a constant.
\end{proof}

Finally, we get the following proposition on the form of the function $a(t)$.

\begin{proposition} \label{prop-a(t)}
The function $a(t)$ satisfies that $a(t)=k\cdotp(\gamma t+\delta)^{n_2}$, where $k$ is a nonzero constant.
\end{proposition}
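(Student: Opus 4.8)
The plan is to combine the two preceding lemmas with a coprimality argument in the parameter $t$. By Lemma~\ref{Bt} the denominator $B(t)$ is a nonzero constant $b$, so $a(t)=A(t)/b$ is a scalar multiple of a polynomial, and by Lemma~\ref{At} we may write $A(t)=(\gamma t+\delta)^{n_2}\,C(t)$ for some $C\in\RR[t]$. The statement therefore reduces to proving that $C(t)$ is a nonzero constant: then $a(t)=k\,(\gamma t+\delta)^{n_2}$ with $k=C/b\neq 0$.

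Next I would clear denominators on the right-hand side of Eq.~\eqref{at}. Since each component of $\bfq_2(\psi(t))$ has denominator dividing $(\gamma t+\delta)^{n_2}$ by Eq.~\eqref{q2i}, the vector $\widehat{\bfq}(t)=(\gamma t+\delta)^{n_2}\bfq_2(\psi(t))$ is polynomial, and Eq.~\eqref{at} becomes
\[
\bfA\cdot\bfq_1(t)=a(t)\cdot\bfq_2(\psi(t))=\frac{(\gamma t+\delta)^{n_2}C(t)}{b}\cdot\frac{\widehat{\bfq}(t)}{(\gamma t+\delta)^{n_2}}=\frac{C(t)}{b}\,\widehat{\bfq}(t).
\]
Thus $\bfA\cdot\bfq_1(t)$ and $\widehat{\bfq}(t)$ are polynomial vectors that differ only by the scalar polynomial factor $C(t)/b$.

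The heart of the argument is that both of these vectors have relatively prime components. For $\bfA\cdot\bfq_1$ this follows from the nonsingularity of $\bfA$: any common factor $d(t)$ of its three components would, after applying $\bfA^{-1}$, divide every component of $\bfq_1=\bfA^{-1}(\bfA\bfq_1)$, contradicting $\gcd(q_{1,1},q_{1,2},q_{1,3})=1$. For $\widehat{\bfq}$ I would argue that reparametrizing $\bfq_2$ by the M\"obius map $\psi$ and clearing denominators is exactly the invertible linear substitution of the degree-$n_2$ homogenization of $\bfq_2$ induced by $\psi$, which preserves coprimality of the components; this is also what the proof of Lemma~\ref{Bt} yields, since it shows $\gcd(N_1,N_2,N_3)$ is constant, and one checks in addition that $\gamma t+\delta$ is not a common factor (it does not divide the numerator $N_i$ attached to an index with $\ell_i=n_2$, for which $\widehat{q}_i=N_i$). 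Granting this, taking the gcd of the three components on each side of the displayed identity gives, up to a nonzero constant, $1=(C(t)/b)\cdot 1$, so $C(t)/b$ is a nonzero constant and $C(t)$ is constant; it is nonzero because $\bfA\bfq_1\not\equiv\mathbf 0$.

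The step I expect to be the main obstacle is precisely the coprimality of the components of $\widehat{\bfq}$, that is, verifying that the M\"obius reparametrization introduces no spurious common factor; everything else is bookkeeping with the two lemmas already proved. Should this prove delicate, an alternative route is a degree count: the displayed identity forces $n_1=\deg C+n_2$, and applying the same relation to the inverse equivalence $f^{-1}$, which interchanges the roles of $S_1$ and $S_2$, gives $n_2=\deg\widetilde C+n_1$ with $\deg\widetilde C\ge 0$; adding these forces $\deg C=0$.
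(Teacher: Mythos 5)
Your proof is correct and follows essentially the same route as the paper's: both reduce, via Lemmas~\ref{At} and~\ref{Bt}, to showing that the leftover polynomial factor multiplying $(\gamma t+\delta)^{n_2}$ is a nonzero constant, and both derive this from the fact that the components of $\bfA\cdot\bfq_1(t)$ inherit coprimality from those of $\bfq_1(t)$ because $\bfA$ is nonsingular. The only difference is that you additionally establish coprimality of the components of $(\gamma t+\delta)^{n_2}\bfq_2(\psi(t))$ (correctly, via the behaviour of the numerators $N_i$ modulo $\gamma t+\delta$), whereas the paper sidesteps this by arguing one-sidedly that a nonconstant factor would force a nonconstant common divisor of the components of $\bfA\cdot\bfq_1(t)$; your extra step is sound but not needed.
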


\begin{proof}From the two previous lemmas we have $a(t)=k(t)\cdotp(\gamma t+\delta)^{n_2}$ for some polynomial $k(t)$. Additionally, from Eq. \eqref{at}
\begin{equation}\label{eq}
	\bfA\cdotp\bfq_1(t)=k(t)\cdotp(\gamma t+\delta)^{n_2}\cdotp\bfq_2(\psi(t)).
	\end{equation}
Taking Eq. \eqref{q2i} into account, we observe that $(\gamma t+\delta)^{n_2}\cdotp\bfq_2(\psi(t))$ is polynomial. If $k(t)$ is not a constant, then the components of $\bfA\cdot \bfq_1(t)$ are not relatively prime, i.e. $\bfA\cdot \bfq_1(t)=r(t)\widehat{\bfq}_1(t)$, with $r(t)$ nonconstant, and $\widehat{\bfq}_1(t)$ a polynomial parametrization with relatively prime components. However, since $\bfA$ is nonsingular, in that case we have $\bfq_1(t)=r(t)\bfA^{-1}\widehat{\bfq}_1(t)$, which implies that the components of $\bfq_1(t)$ are not relatively prime either. Since by hypothesis the components of $\bfq_1(t)$ are relatively prime, $k(t)$ must be a constant $k$. Finally, since $\bfA$ is nonsingular, from Eq. \eqref{eq} we get that $k\neq 0$.  
	\end{proof}

Taking Proposition \ref{prop-a(t)} into account and Eq. \eqref{eq}, we get the following corollary.
	
\begin{corollary}\label{cor-eq-deg}
If $S_1,S_2$ are affinely equivalent, then $n_1=n_2$. 
\end{corollary}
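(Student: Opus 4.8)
The plan is to read the claim directly off Eq.~\eqref{eq}, which, after Proposition~\ref{prop-a(t)} has reduced $k(t)$ to a nonzero constant $k$, takes the form
\[
\bfA\cdot\bfq_1(t)=k\cdot(\gamma t+\delta)^{n_2}\cdot\bfq_2(\psi(t)),
\]
and to compare the largest degree occurring among the three components on each side. Both sides are polynomial vectors (the right-hand side because $(\gamma t+\delta)^{n_2}\bfq_2(\psi(t))$ is polynomial, as noted in the proof of Proposition~\ref{prop-a(t)}), so this comparison is meaningful; write $\deg(\bfv)$ for the maximum of the degrees of the components of a polynomial vector $\bfv(t)$.

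First I would show that the left-hand side satisfies $\deg(\bfA\cdot\bfq_1)=n_1$ exactly. The bound $\deg(\bfA\cdot\bfq_1)\le n_1$ is immediate, since each component is a linear combination of $q_{1,1},q_{1,2},q_{1,3}$. For the reverse inequality, let $\bfv\in\RR^3$ be the vector of coefficients of $t^{n_1}$ in $\bfq_1(t)$; by the definition of $n_1$ we have $\bfv\neq\mathbf{0}$, and the vector of coefficients of $t^{n_1}$ in $\bfA\cdot\bfq_1$ is exactly $\bfA\bfv$. Because $\bfA$ is nonsingular, $\bfA\bfv\neq\mathbf{0}$, so at least one component of $\bfA\cdot\bfq_1$ genuinely has degree $n_1$.

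Next I would bound the right-hand side from above. Using Eq.~\eqref{q2i}, its $i$-th component equals $k(\gamma t+\delta)^{n_2-\ell_i}N_i(t)$, where $N_i$ is the numerator of $q_{2,i}(\psi(t))$ and $\ell_i=\deg(q_{2,i})\le n_2$. Since $\deg N_i\le \ell_i$, every such component has degree at most $(n_2-\ell_i)+\ell_i=n_2$, so $\deg$ of the right-hand side is $\le n_2$. Combining the two estimates gives $n_1\le n_2$. To upgrade this to equality I would invoke the symmetry of affine equivalence: $f^{-1}(\bfx)=\bfA^{-1}\bfx-\bfA^{-1}\bfb$ is again a nonsingular affine map with $f^{-1}(S_2)=S_1$, and since $S_2,S_1$ satisfy the same standing hypotheses, Proposition~\ref{prop-a(t)} applies verbatim with the two surfaces interchanged. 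This produces an identity of the same shape built from $\bfA^{-1}$, a new M\"obius transformation $\psi'$ and exponent $n_1$; running the identical degree count then yields $n_2\le n_1$, whence $n_1=n_2$.

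I expect the only delicate point to be the sharpness claim for the left-hand side, namely that a nonsingular $\bfA$ cannot drop the top degree; this is precisely where invertibility of $\bfA$ is indispensable. One could instead prove the reverse inequality directly, by showing that the right-hand side actually attains degree $n_2$, but this is more subtle: it requires ruling out simultaneous vanishing of the leading coefficients of all three components, which amounts to showing that $\alpha/\gamma$ is not a common root of $q_{2,1},q_{2,2},q_{2,3}$ — exactly the role of the hypothesis $\gcd(q_{2,1},q_{2,2},q_{2,3})=1$, with the degenerate case $\gamma=0$ treated separately. The symmetry argument circumvents this bookkeeping, and it is the route I would follow.
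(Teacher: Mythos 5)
Your proof is correct. The paper offers no explicit argument for this corollary --- it is stated as an immediate consequence of Proposition \ref{prop-a(t)} and Eq.~\eqref{eq} --- and the intended reasoning is exactly the degree comparison you carry out: both sides of $\bfA\cdot\bfq_1(t)=k\,(\gamma t+\delta)^{n_2}\,\bfq_2(\psi(t))$ are polynomial vectors, the left-hand side has degree exactly $n_1$ because the nonsingular matrix $\bfA$ cannot annihilate the nonzero leading coefficient vector, and the right-hand side is governed by $n_2$. Where you diverge slightly is in how the two inequalities are obtained. The most direct reading of the paper would establish $n_1=n_2$ in one stroke by showing the right-hand side attains degree exactly $n_2$, which (as you correctly observe) requires the hypothesis $\gcd(q_{2,1},q_{2,2},q_{2,3})=1$ to rule out all three leading coefficients $\gamma^{\ell_i}q_{2,i}(\alpha/\gamma)$ vanishing simultaneously, with $\gamma=0$ handled separately. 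You instead prove only $n_1\le n_2$ this way and obtain $n_2\le n_1$ by applying the entire framework to $f^{-1}$, which is legitimate since the standing hypotheses are symmetric in $S_1,S_2$ and $f^{-1}$ is again a nonsingular affine equivalence. The symmetry route costs an invocation of Proposition \ref{prop-a(t)} in the reverse direction but spares the leading-coefficient bookkeeping; the direct route is self-contained and makes visible exactly where the relative-primality assumption on $\bfq_2$ enters. Either is a complete proof.
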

	
We summarize the previous results in the following theorem. In the rest of the paper, we denote, according to Corollary \ref{cor-eq-deg}, $n_1=n_2=n$.

\begin{theorem}\label{fund2}
Let $S_1,S_2$ be two rational ruled surfaces properly parametrized as in Eq. \eqref{surf}, which are not doubly ruled. Let $\bfq_i(t)=(q_{i,1}(t),q_{i,2}(t),q_{i,3}(t))$, with $q_{i,j}(t)\in {\RR}[t]$ for $i=1,2$ and $j=1,2,3$, where $n_1=n_2=n$. Let $f({\bf x})=\bfA {\bf x}+\bfb$, with $\bfA$ nonsingular, such that $f(S_1)=S_2$, and let $\varphi:\RR^2\to \RR^2$ be the birational transformation making the diagram in Eq. \eqref{eq:fundamentaldiagram} commutative. Then
	\begin{equation}\label{phi-funct-2}
	\varphi(t,s)=(\psi(t),k\cdotp(\gamma t+\delta)^{n}\cdotp s+ c(t)),
	\end{equation}
	where $\psi(t)$ is a M\"obius transformation, $k$ is a constant, and $c(t)$ is a rational function, satisfying that 
\begin{equation}\label{eqmain}
\bfA\cdotp\bfq_1(t)=k\cdotp(\gamma t+\delta)^{n}\cdotp\bfq_2(\psi(t)).	
\end{equation}	
\end{theorem}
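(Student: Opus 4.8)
The plan is to recognize that Theorem \ref{fund2} is essentially a consolidation of the results already established in Propositions \ref{fund} and \ref{prop-a(t)}, together with Corollary \ref{cor-eq-deg}, so the work consists in assembling these pieces rather than proving anything genuinely new. First I would invoke Proposition \ref{fund}, which under the standing hypotheses (the surfaces are properly parametrized as in Eq. \eqref{surf} and not doubly ruled, and $f(S_1)=S_2$ with $\bfA$ nonsingular) guarantees that the birational transformation $\varphi$ making the diagram in Eq. \eqref{eq:fundamentaldiagram} commutative has the form $\varphi(t,s)=(\psi(t),a(t)\cdot s+c(t))$, where $\psi(t)$ is a M\"obius transformation and $a(t),c(t)$ are rational functions. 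This immediately fixes the first component of $\varphi$ and the affine-in-$s$ shape of the second component.

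Next I would pin down the multiplier $a(t)$ using Proposition \ref{prop-a(t)}, which asserts precisely that $a(t)=k\cdot(\gamma t+\delta)^{n_2}$ for a nonzero constant $k$, where $\gamma,\delta$ are the coefficients appearing in $\psi(t)=(\alpha t+\beta)/(\gamma t+\delta)$ and $n_2$ is defined as in Eq. \eqref{eq-n}. Substituting this expression for $a(t)$ into the form provided by Proposition \ref{fund} yields the claimed formula for $\varphi$, namely Eq. \eqref{phi-funct-2}, once I apply Corollary \ref{cor-eq-deg} to write $n_1=n_2=n$; this last step is purely a change of notation, legitimised by the fact that affine equivalence forces the two maximal ruling degrees to coincide.

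It remains to record Eq. \eqref{eqmain}. This is obtained directly from Eq. \eqref{at}, which itself arises from comparing the coefficients of $s$ in the fundamental relation $f\circ\bfx_1=\bfx_2\circ\varphi$ of Eq. \eqref{fundam-eq} after inserting $f(\bfx)=\bfA\bfx+\bfb$ and the form of $\varphi$. Equation \eqref{at} reads $\bfA\cdot\bfq_1(t)=a(t)\cdot\bfq_2(\psi(t))$, and substituting $a(t)=k\cdot(\gamma t+\delta)^{n}$ produces exactly Eq. \eqref{eqmain}. Thus every assertion of the theorem follows by direct substitution.

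Since each ingredient has already been proved, I do not expect any genuine obstacle; the only point requiring mild care is bookkeeping, namely ensuring that the constant $k$ and the coefficients $\gamma,\delta$ carried through from Proposition \ref{prop-a(t)} and the M\"obius transformation $\psi$ are consistently the same objects across Eqs. \eqref{phi-funct}, \eqref{at}, and \eqref{eq}, so that the substitutions are valid verbatim rather than merely up to relabelling. Because this is a summarising theorem, I would keep the proof to a few lines, explicitly citing Propositions \ref{fund} and \ref{prop-a(t)}, Corollary \ref{cor-eq-deg}, and Eq. \eqref{at}, and then stating that Eqs. \eqref{phi-funct-2} and \eqref{eqmain} follow at once.
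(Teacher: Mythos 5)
Your proposal is correct and matches the paper exactly: the paper offers no separate proof of Theorem \ref{fund2}, introducing it with the phrase ``We summarize the previous results in the following theorem,'' so it is precisely the consolidation of Proposition \ref{fund}, Proposition \ref{prop-a(t)}, Corollary \ref{cor-eq-deg} and Eq. \eqref{at} that you describe. Your bookkeeping remark about keeping $k,\gamma,\delta$ consistent across the cited equations is the only substantive point, and it is handled implicitly in the paper in the same way.
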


The equation Eq. \eqref{eqmain} can be interpreted in geometric terms. In order to do this, it is clearer to write Eq. \eqref{eqmain} projectively. Let $\tilde{\bfq}_i(t,\omega)=[q_{i,1}(t,\omega):q_{i,2}(t,\omega):q_{i,3}(t,\omega)]\in {\Bbb P}^2({\Bbb R})$, where $i=1,2$ and $\omega$ is a homogenization variable. Then Eq. \eqref{eqmain} can be written as 
\begin{equation}\label{proj-main}
\bfA\cdot \tilde{\bfq}_1(t,\omega)=k\cdot \tilde{\bfq}_2(\alpha t+\beta \omega,\gamma t+\delta \omega).
\end{equation}
What Eq. \eqref{proj-main} is expressing (see Section 3 of \cite{HJ18}) is the fact that the projective curves defined by $\tilde{\bfq}_1(t,\omega)$ and $\tilde{\bfq}_2(t,\omega)$ are \emph{projectively equivalent}, and even more, that $\bfA$ defines a projectivity mapping the projective curve defined by $\tilde{\bfq}_1(t,\omega)$ onto the projective curve defined by $\tilde{\bfq}_2(t,\omega)$ (or $k\cdot \tilde{\bfq}_2(t,\omega)$, since projectively $\tilde{\bfq}_2(t,\omega)$ and $k\cdot \tilde{\bfq}_2(t,\omega)$ can be identified). This makes perfect sense from a geometric point of view: affine equivalences map rulings of $S_1$ onto rulings of $S_2$, as observed in the proof of Proposition \ref{fund}, and $\tilde{\bfq}_1(t,\omega)$, $\tilde{\bfq}_2(t,\omega)$ define the \emph{directions} of these rulings. The matrix $\bfA$ defines the map sending the direction of each ruling of $S_1$, onto the direction of another ruling of $S_2$. 

Projective equivalences between curves in any dimension, and in particular systems of equations like Eq. \eqref{proj-main} (and therefore Eq. \eqref{eqmain}) are studied in great detail in \cite{HJ18}. We will benefit from the study carried out in \cite{HJ18} in the next section, where we address the computation of the affine equivalences between $S_1,S_2$. 

Finally, from Eq. \eqref{fundam-eq} and Eq. \eqref{phi-funct-2}, we get the relationship 
\begin{equation}\label{para-c}
\bfA\bfp_1(t)+\bfb=\bfp_2(\psi(t))+c(t)\bfq_2(\psi(t)).
\end{equation}

We will see how to exploit Eq. \eqref{eqmain} and Eq. \eqref{para-c} in the coming section. 

\section{Computation of the affine equivalences.} \label{sec-computation}

The computation of the affine equivalences between $S_1,S_2$ is based on the following result, which in turn follows from the results of the previous section.

\begin{proposition}\label{prop-comp}
The affine equivalences $f({\bf x})=\bfA{\bf x}+\bfb$ between $S_1,S_2$ correspond to the $\bfA\in{\Bbb R}^{3\times 3}$, $\bfb\in {\Bbb R}^3$ satisfying \emph{Eq.} \eqref{eqmain} and \emph{Eq.} \eqref{para-c}, where $\emph{\mbox{det}}(\bfA)\neq 0$, $k\neq 0$, $\psi(t)=\frac{\alpha t+\beta}{\gamma t+\delta}$ and $\alpha\delta-\beta\gamma \neq 0$. 
\end{proposition}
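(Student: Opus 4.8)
The plan is to establish the stated characterization as a two-way implication, essentially by assembling Theorem~\ref{th-fundam} and Theorem~\ref{fund2}, which already carry all the geometric content; what remains is to package their conclusions into the single equivalence claimed here.

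For the forward implication I would start from an affine equivalence $f(\bfx)=\bfA\bfx+\bfb$ with $f(S_1)=S_2$ and $\bfA$ nonsingular. Theorem~\ref{th-fundam} produces a birational $\varphi$ with $f\circ\bfx_1=\bfx_2\circ\varphi$, and Theorem~\ref{fund2} pins down its shape, $\varphi(t,s)=(\psi(t),k(\gamma t+\delta)^{n}s+c(t))$, together with Eq.~\eqref{eqmain}. Writing out Eq.~\eqref{fundam-eq} with $\bfx_1(t,s)=\bfp_1(t)+s\bfq_1(t)$ and this explicit $\varphi$, the coefficient of $s$ reproduces Eq.~\eqref{eqmain}, while the part independent of $s$ yields exactly Eq.~\eqref{para-c}. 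The side conditions are then inherited: $\det(\bfA)\neq0$ by hypothesis, $\alpha\delta-\beta\gamma\neq0$ because $\psi$ is a M\"obius transformation, and $k\neq0$ from Proposition~\ref{prop-a(t)}.

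For the converse, suppose $\bfA,\bfb$, together with the auxiliary data $\psi$, $k$ and $c(t)$, satisfy Eq.~\eqref{eqmain} and Eq.~\eqref{para-c} under the stated constraints. I would define the candidate $\varphi(t,s)=(\psi(t),k(\gamma t+\delta)^{n}s+c(t))$ and verify Eq.~\eqref{fundam-eq} by a direct substitution: $f(\bfx_1(t,s))=\bfA\bfp_1(t)+\bfb+s\,\bfA\bfq_1(t)$, and replacing $\bfA\bfp_1(t)+\bfb$ using Eq.~\eqref{para-c} and $\bfA\bfq_1(t)$ using Eq.~\eqref{eqmain} gives precisely $\bfp_2(\psi(t))+(k(\gamma t+\delta)^{n}s+c(t))\bfq_2(\psi(t))=\bfx_2(\varphi(t,s))$. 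It then suffices to confirm that this $\varphi$ is birational, for then the ``$\Leftarrow$'' direction of Theorem~\ref{th-fundam} applies and delivers $f(S_1)=S_2$.

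The only genuinely delicate point is this birationality check, which I would treat explicitly. The inverse of $\varphi$ is obtained by solving $u=\psi(t)$ for $t$ --- rational since $\alpha\delta-\beta\gamma\neq0$ makes $\psi$ an invertible M\"obius map --- and then solving $v=k(\gamma t+\delta)^{n}s+c(t)$ for $s$, which is legitimate for a generic point precisely because $k\neq0$ and $\gamma t+\delta$ is not identically zero (again guaranteed by $\alpha\delta-\beta\gamma\neq0$). Hence $\varphi$ is a well-defined Cremona transformation and the argument closes. I do not anticipate any real obstruction beyond this bookkeeping, since the substantive work --- deriving the form of $\varphi$ and Eq.~\eqref{eqmain} --- was already carried out in the previous section.
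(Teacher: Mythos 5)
Your proposal is correct and follows essentially the same route as the paper, which states that the proposition ``follows from the results of the previous section'' (Theorem \ref{th-fundam}, Theorem \ref{fund2}, and the derivation of Eq.~\eqref{para-c} by separating the $s$-coefficient from the $s$-independent part of Eq.~\eqref{fundam-eq}); you simply make both directions explicit. The extra care you take with the birationality of $\varphi$ in the converse is a reasonable elaboration of what the paper leaves implicit, not a different argument.
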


Notice that since the components of $\bfq_2(t)$ are polynomials of degree at most $n$, Eq. \eqref{eqmain} only involves polynomials, and provides equations which are linear in the entries $\bfA_{ij}$ of the matrix $\bfA$; furthermore, the coefficients of the $\bfA_{ij}$ in these linear equations are constants, while the constant terms of these linear equations depend on $\alpha,\beta,\gamma,\delta$ and $k$. However, Eq. \eqref{para-c} involves rational functions, i.e. polynomial denominators. Additionally, since $\alpha\delta-\beta\gamma\neq 0$, we can always assume either that $\alpha\delta-\beta\gamma=1$, or separate the analysis in two different cases, namely the case $\gamma=1$, and the case $\gamma=0$, $\delta=1$, which allows to perform the computation with fewer variables (although twice). 

The computation proceeds in three different steps, (A), (B), (C). Let us describe these steps in detail. 

\vspace{2 mm}

\noindent \emph{(A) Writing $\bfA$ in terms of $\alpha,\beta,\gamma,\delta$, and $k$.} At this step we exploit Eq. \eqref{eqmain}, which has been studied in great detail in Section 3 of \cite{HJ18}. Writing Eq. \eqref{eqmain} in components, we get 

\begin{equation}\label{incompo}
\left\{\begin{array}{ccc}
\bfA_{11}\cdotp q_{1,1}(t)+\bfA_{12}\cdotp q_{1,2}(t)+\bfA_{13}\cdotp q_{1,3}(t) & = & k(\gamma t+\beta)^n q_{2,1}(\psi(t)),\\
\bfA_{21}\cdotp q_{1,1}(t)+\bfA_{22}\cdotp q_{1,2}(t)+\bfA_{23}\cdotp q_{1,3}(t) & = & k(\gamma t+\beta)^n q_{2,2}(\psi(t)),\\
\bfA_{31}\cdotp q_{1,1}(t)+\bfA_{32}\cdotp q_{1,2}(t)+\bfA_{33}\cdotp q_{1,3}(t) & = & k(\gamma t+\beta)^n q_{2,3}(\psi(t)).
\end{array}\right.
\end{equation}

Since the $q_{2,j}(t)$ have degree at most $n$, the expressions at the right hand-side of Eq. \eqref{incompo} are, in fact, polynomials. Setting equal the coefficients of $t^\ell$, where $\ell=0,1,\ldots,n$, at both sides od Eq. \eqref{incompo}, we derive a system ${\mathcal L}$, linear in the $\bfA_{ij}$, where the coefficients of the $\bfA_{ij}$ are constant numbers, and where the constant terms are polynomials in $\alpha,\beta,\gamma,\delta$ and $k$. Let us write $\bfq_1(t)$ as
\begin{equation}\label{qcomponents}
\bfq_1(t)={\bf v}_0+{\bf v}_1 t+\cdots+{\bf v}_n t^n,
\end{equation}
where ${\bf v}_\ell\in {\Bbb R}^3$, for $\ell=0,\ldots,n$, is a numeric row vector whose components are the coefficients in $t^\ell$ of $q_{1,1}(t),q_{1,2}(t)$ and $q_{1,3}(t)$, respectively. Then the system ${\mathcal L}$ has the form: 

\begin{equation}\label{sysL}
\underbrace{\left[\begin{array}{rcl}
\begin{array}{c}
{\bf v}_0  \\ 
\vdots     \\
{\bf v}_n 
\end{array} & & \\ 
& \begin{array}{c}
{\bf v}_0  \\ 
\vdots     \\
{\bf v}_n 
\end{array} & \\
& & \begin{array}{c}
{\bf v}_0  \\ 
\vdots     \\
{\bf v}_n 
\end{array} \\
\end{array}\right]}_{\mathcal A}
\cdot \begin{bmatrix} 
\bfA_{11}\\
\bfA_{12}\\
\bfA_{13}\\
\bfA_{21}\\
\vdots \\
\bfA_{33} 
\end{bmatrix} = \left[\begin{array}{l}\bullet_1 \\ \vdots \\ \bullet_{n+1} \\ \vdots \\ \bullet_{2(n+1)} \\ \vdots \\ \bullet_{3(n+1)} \end{array}\right]
\end{equation}

Here we see that ${\mathcal A}\in {\Bbb R}^{3(n+1)\times 9}$ is a block matrix with three nonzero blocks of size $(n+1)\times 3$, consisting of the row vectors ${\bf v}_0,\ldots,{\bf v}_n$. The constant terms $\bullet_j$, where $j=1,\ldots,3(n+1)$, are products of $k$ times a homogeneous polynomial in $\alpha,\beta,\gamma,\delta$ of degree $n$, a structure observed in Section 3.2 of \cite{HJ18}. Notice also that the number of $3(n+1)$ equations is in agreement with the observations raised in Section 3 of \cite{HJ18} (compare to Table 2 in Section 3 of \cite{HJ18}, taking into account that we are dealing with projective curves, defined by $\tilde{\bfq}_1,\tilde{\bfq}_2$, in the projective plane).

Let $r=\mbox{rank}({\bf v}_0,\ldots,{\bf v}_n)$; notice that since ${\bf v}_{\ell}\in {\Bbb R}^3$, we get $r\leq 3$. Furthermore, if $r=2$ then the directions of all the rulings of $S_1$ are parallel to a plane, and if $r=1$ then all the rulings of $S_1$ are parallel to a same vector ${\bf v}$, i.e. $S_1$ is a \emph{cylindrical} surface; this special case is much easier to solve, see Subsection \ref{subsec-cyl}. 

Now by the structure of the matrix ${\mathcal A}$ we get $\mbox{rank}({\mathcal A})=3r$. Let us address the cases $r=3$ and $r=2$. The case $r=3$ is analyzed in detail in Section 3.2 of \cite{HJ18}; here we adapt several results of \cite{HJ18} to our case. However, case $r=2$ is, apparently, not addressed in \cite{HJ18}.

\begin{itemize}
\item [(1)] {\bf Case} $r=3$: since $\mbox{rank}({\mathcal A})=3r$, for $r=3$ we get $\mbox{rank}({\mathcal A})=9$, so we can solve the system ${\mathcal L}$ and write the $\bfA_{ij}$ in terms of $\alpha,\beta,\gamma,\delta$ and $k$. Additionally, applying Gauss-Jordan method to the system ${\mathcal L}$ we get $3(n+1)-3r$ additional conditions on $\alpha,\beta,\gamma,\delta$ and $k$ that must hold for ${\mathcal L}$ to be consistent; when $r=3$, we get $3n-6$ conditions of this type. These conditions are products of $k$ times a homogeneous polynomial in $\alpha,\beta,\gamma,\delta$. Since $k\neq 0$, we can factor out $k$ and get $3n-6$ homogeneous conditions on $\alpha,\beta,\gamma,\delta$ alone, of degree $n$. Since $\alpha\delta-\beta\gamma\neq 0$, one can add the extra condition $\alpha\delta-\beta\gamma=1$. 

This way we get a polynomial system $P_{\mathcal A}$ in $\alpha,\beta,\gamma,\delta$: if this polynomial system is not consistent, the surfaces $S_1,S_2$ are identified as non-affinely equivalent, and the computation stops. Otherwise we can get either tentative values for $\alpha,\beta,\gamma,\delta$ that may or may not give rise to an affine equivalence between $S_1,S_2$ (this must be tested later), or a number of relations between the $\alpha,\beta,\gamma,\delta$. If these relations allow to write some of these parameters in terms of the others, we can reduce the number of parameters in the subsequent computations. 

Notice that when $n=2$, we get $3n-6=3\cdot 2-6=0$, so no extra conditions in $\alpha,\beta,\gamma,\delta$ are obtained. However, we can still write the ${\bfA}_{ij}$ in terms of $\alpha,\beta,\gamma,\delta$ and $k$.

\item [(2)] {\bf Case} $r=2$: in this case, since $r=2$ applying Gauss-Jordan method to the system ${\mathcal L}$ we get $3(n+1)-3\cdot 2=3n-3$ additional conditions on $\alpha,\beta,\gamma,\delta,k$ that must hold for ${\mathcal L}$ to be consistent, with the same properties as in the case before; also as before, we denote the collection of all these polynomial conditions by $P_{\mathcal A}$. However, since $\mbox{rank}({\mathcal A})=6$ is less than the number of $\bfA_{ij}$, in this case we cannot write all the $\bfA_{ij}$ in terms of $\alpha,\beta,\gamma,\delta,k$ only, i.e. three of the $\bfA_{ij}$ must act as parameters as well. This makes sense from a geometric point of view: if $r=2$ then $\bfq_1(t),\bfq_2(t)$ parametrize projective lines, and there are infinitely many projective transformations mapping a projective line onto another projective line. 
\end{itemize}

Observe that when the components of $\bfq_1(t)$ are linear, we are always either in the case $r=1$, or in the case $r=2$. In this last case, since $n=1$ the number $3n-3$ of extra conditions vanishes, so we get no extra conditions on $\alpha,\beta,\gamma,\delta$. 

Summarizing, at this step we write either all the $\bfA_{ij}$, when $r=3$, or only six of the $\bfA_{ij}$, when $r=2$, in terms of $\alpha,\beta,\gamma,\delta,k$. Furthermore, except in the case $r=3,n=2$ and the case $n=1$, we get polynomial conditions on $\alpha,\beta,\gamma,\delta$, which may help to either detect that the surfaces are not affine equivalent (when these conditions are not compatible), or to reduce the number of parameters.  

\vspace{2 mm}

\noindent \emph{(B) Writing $\bfb$ in terms of $\alpha,\beta,\gamma,\delta$, and $k$, and computing $c(t)$.} Writing Eq. \eqref{para-c} in components, we get

\begin{equation}\label{system}
\left\{\begin{array}{ccc}
\bfA_{11}\cdotp p_{1,1}(t)+\bfA_{12}\cdotp p_{1,2}(t)+\bfA_{13}\cdotp p_{1,3}(t)+b_1 & = & p_{2,1}(\psi(t))+c(t)q_{2,1}(\psi(t)),\\
\bfA_{21}\cdotp p_{1,1}(t)+\bfA_{22}\cdotp p_{1,2}(t)+\bfA_{23}\cdotp p_{1,3}(t)+b_2 & = & p_{2,2}(\psi(t))+c(t)q_{2,2}(\psi(t)),\\
\bfA_{31}\cdotp p_{1,1}(t)+\bfA_{32}\cdotp p_{1,2}(t)+\bfA_{33}\cdotp p_{1,3}(t)+b_3 & = & p_{2,3}(\psi(t))+c(t)q_{2,3}(\psi(t)),
\end{array}\right.
\end{equation}

\noindent where we assume that the $\bfA_{ij}$, or some of the $\bfA_{ij}$, have already been written in terms of $\alpha,\beta,\gamma,\delta,k$. Now we proceed as follows: 

\begin{itemize}
\item [(i)] Eliminating $c(t)$ between the first and second equations of Eq. \eqref{system} provides an equation $E_1$ linear in $b_1,b_2$, with coefficients that are rational functions of $t$. 
\item [(ii)] Proceeding in the same way with the second and third equations, we get an equation $E_2$, linear in $b_2,b_3$. 
\item [(iii)] We evaluate $E_1$ and $E_2$ at several random $t$-values. This way we get a linear system in $b_1,b_2,b_3$, whose solution provides $\bfb$. 
\item [(iv)] Finally, we compute $c(t)$ from any equation of Eq. \eqref{system}. 
\end{itemize}

We will refer later to this procedure as ``the steps (i)-(iv)". 

\vspace{2 mm}

\noindent \emph{(C) Deriving a polynomial system ${\mathcal S}$, and computing the affine equivalences.} Substituting the expressions for $\bfA,\bfb$ and $c(t)$ computed in the steps (A) and (B) into Eq. \eqref{fundam-eq}, we obtain a polynomial system ${\mathcal S}$. If $r=3$, the unknowns of ${\mathcal S}$ are, at most, $k,\alpha,\beta,\gamma,\delta$, and we can have fewer unknowns if the polynomial conditions $P_{\mathcal A}$ in step (A) allow to write some of these variables in terms of the others. If $r=2$, we can have at most three more unknowns besides $k,\alpha,\beta,\gamma,\delta$, namely three of the $\bfA_{ij}$; again, the polynomial system $P_{\mathcal A}$ may help reduce the total number of parameters, and therefor of unknowns in ${\mathcal S}$. Thus, the number of unknowns in ${\mathcal S}$ is $\leq 5$, if $r=3$, and $\leq 8$, if $r=2$. 

The solutions of this polynomial system provide the affine equivalences between $S_1,S_2$. We summarize the whole procedure to find the affine equivalences between $S_1,S_2$ in Algorithm {\tt Affine-Eq-Ruled}.

\begin{algorithm}[t!]
\begin{algorithmic}[1]
\REQUIRE Two ruled surfaces $S_1,S_2$, properly parametrized by $\bfx_i(t,s)=\bfp_i(t)+s\bfq_i(t)$, $i=1,2$, where each $\bfq_i(t)$ is polynomial and with relatively prime components of degree $\leq n$.
\ENSURE The affine equivalences $f({\bf x})=\bfA {\bf x}+\bfb$ between $S_1,S_2$.
\STATE{Compute the system ${\mathcal L}$ in Eq. \eqref{sysL}.} 
\STATE{Apply the Gauss-Jordan method on the system ${\mathcal L}$.}
\IF{$r=3$ and $n\geq 3$, or $r=2$ and $n\geq 2$}
\STATE{solve the polynomial system $P_{\mathcal A}$ in $\alpha,\beta,\gamma,\delta$.}
\IF{$P_{\mathcal A}$ is not consistent}
\STATE{{\bf return} {\tt $S_1$ and $S_2$ are not affinely equivalent}, and {\bf stop}}
\ENDIF 
\ENDIF
\STATE{Solve the system ${\mathcal L}$}
\STATE{Write the solutions of ${\mathcal L}$ with as few variables as possible, using, if any, the solutions of $P_{\mathcal A}$}
\STATE{Follow the steps (i)-(iv) to write $\bfb$ in terms of the variables in the step before, and to compute $c(t)$}
\STATE{Substitute $\bfA,\bfb,c(t)$ and the $\varphi$ in Eq. \eqref{phi-funct-2} into Eq. \eqref{fundam-eq}}
\STATE{Derive from the preceding substitution a polynomial system ${\mathcal S}$ in the parameters appearing in Step 9}
\IF{no solution is found}
\STATE{{\bf return} {\tt $S_1$ and $S_2$ are not affinely equivalent.}}
\ELSE
\FOR{each solution found}
\STATE{compute the corresponding mapping $f({\bf x})=\bfA {\bf x}+\bfb$}
\ENDFOR
\ENDIF
\end{algorithmic}
\caption{Affine-Eq-Ruled}\label{equiv-alg}
\end{algorithm}

\subsection{The special case of conical surfaces.} \label{subsec-conical}

We say that $S$ is a \emph{conical} surface if all the rulings of $S$ intersect at one point ${\bf p}_0\in S$, called the \emph{vertex}. The vertex can be computed by using the results in \cite{AG17}, and by applying a translation if necessary, we can always assume that ${\bf p}_0$ is the origin. Therefore, if $S$ is rational and properly parametrized we can assume that $S$ is given by means of a parametrization $\bfx(t,s)=s\bfq(t)$, where $\bfq(t)$ is polynomial. 

Now given two rational conical surfaces $S_1,S_2$ parametrized by $\bfx_i(t,s)=s\bfq_i(t)$, with $\bfq_i(t)$ polynomial for $i=1,2$, any affine equivalence between $S_1,S_2$ has the form $f({\bf x})=\bfA {\bf x}$, so $\bfb={\bf 0}$. Since $p_1(t),p_2(t)$ are identically zero, we get that the function $c(t)$ is identically zero as well, and therefore Eq. \eqref{para-c} is reduced to $0=0$. Thus, the computation of the affine equivalences between $S_1,S_2$ reduces to solving Eq. \eqref{eqmain}. Notice as well that the system derived from Eq. \eqref{eqmain} is homogeneous in $k$ and the entries of the matrix $\bfA$, which implies that $\bfA$ is defined only up to a multiplicative constant. This makes perfect sense, since any conical surface is invariant by homotheties where the homothety center is the vertex.

\subsection{The special case of cylindrical surfaces.} \label{subsec-cyl}

Under the assumption that $\bfq_1(t),\bfq_2(t)$ are polynomial, and with relative prime components, $S_1,S_2$ are cylindrical iff the $\bfq_i(t)$ are constant vectors. These vectors define the direction of all the rulings of $S_1,S_2$. Then in order to check whether or not $S_1,S_2$ are affinely equivalent, it suffices to check whether or not the planar curves ${\mathcal C}_1,{\mathcal C}_2$, obtained by intersecting $S_1,S_2$ with planes $\Pi_1,\Pi_2$ respectively normal to the direction of $\bfq_1(t),\bfq_2(t)$, are affinely equivalent. This can be done, for instance, by using the algorithm in \cite{HJ18}. Notice that the affine equivalences of $S_1,S_2$ are, in this case, the affine equivalences of the plane sections followed by any translations along the direction of the rulings of $S_2$, and any dilatation in the same direction.

\subsection{Computing isometries and symmetries.} \label{spec-sub-sym}

Let us address now the case when the affine mapping $f({\bf x})=\bfA {\bf x}+\bfb$ is orthogonal, in which case $f$ is an isometry between $S_1,S_2$. In order to find the isometries between $S_1,S_2$ we can certainly apply Algorithm {\tt Affine-Eq-Ruled}, with the extra condition that $\bfA$ is orthogonal. However, in this case we have additional conditions, which may be an advantage in order to simplify the computation. Indeed, since orthogonal mappings preserve norms, taking norms in Eq. \eqref{at}, with $k$ a constant, we reach the condition

\begin{equation}\label{eq-compu}
\Vert \bfq_1(t)\Vert^2-k^2\cdot (\gamma t+\delta)^{2n}\cdot \Vert \bfq_2(\psi(t))\Vert^2=0.
\end{equation}

\noindent Setting all the coefficients in $t$ at the left hand-side of Eq. \eqref{eq-compu} equal to zero, we get a polynomial system ${\mathcal P}$ of $2n+1$ equations, each one consisting of a homogeneous polynomial of degree $2n$ in the variables $\alpha,\beta,\gamma,\delta$ multiplied by $k^2$, plus a constant. These equations have a higher degree than the equations of the polynomial system $P_{\mathcal A}$, which were of degree $n$. However, collecting the equations in $P_{\mathcal A}$ and ${\mathcal P}$ provides a bigger polynomial system in $\alpha,\beta,\gamma,\delta,k$, which may help to reduce the total number of parameters in the polynomial system ${\mathcal S}$, and$/$or the number of tentative values for $\alpha,\beta,\gamma,\delta,k$. In particular, in the cases $r=3,n=2$ and $n=1$ applying Algorithm {\tt Affine-Eq-Ruled} does not provide extra conditions on $\alpha,\beta,\gamma,\delta,k$; however, Eq. \eqref{eq-compu} does.  

If $S_1=S_2=S$, the isometries leaving $S$ invariant are the symmetries of $S$. We can find the symmetries of $S$ by proceeding as before with $S_1=S_2$. However, recall from Section \ref{gen-surf} that certain notable symmetries, like central symmetries, axial symmetries and reflections on a plane, are involutions, i.e. affine mappings $f$ satisfying $f\circ f=\mbox{id}_{{\RR}^3}$. If we are interested only in involutions, we can improve the computation in the following way, First, from Eq. \eqref{fundam-eq}, one can see that $f\circ f=\mbox{id}_{{\RR}^3}$ iff the corresponding $\varphi$ satisfies $\varphi\circ \varphi=\mbox{id}_{{\RR}^2}$. Since we know from Theorem \ref{fund2} that 
\[
\varphi(t,s)=(\varphi_1(t,s),\varphi_2(t,s))=(\psi(t),s\cdot k(\gamma t+\delta)^n+c(t)),
\]
imposing here that $(\varphi\circ \varphi)(t,s)=(t,s)$ one gets two constraints:

\begin{itemize}
\item [(i)] $(\varphi_1\circ \varphi_1)(t,s)=t$, i.e. $(\psi\circ \psi)(t)=t$. In turn, this implies that 
\[
\alpha^2-\delta^2=0,\mbox{ }\beta(\alpha+\delta)=0,\mbox{ }\gamma(\alpha+\delta)=0.
\]
Therefore, either $\alpha=-\delta$, or $\alpha+\delta\neq 0$ and $\alpha=\delta$, $\beta=\gamma=0$. 
\item [(ii)] $\varphi_2(\varphi_1(t),\varphi_2(t,s))=s$, which implies 
\[
\left[s\cdot k(\gamma t+\delta)^n+c(t)\right]\cdot k\cdot \left[\gamma\cdot \frac{\alpha t +\beta}{\gamma t+\delta}+\delta\right]^n+c(\psi(t))=s.
\]
Comparing coefficients of $s$, we deduce that
\[
k^2\cdot \left[\gamma(\alpha +\delta)t+(\gamma \beta+\delta^2)\right]^n=1,
\]
which in turn yields 
\[
\gamma(\alpha+\delta)=0,\mbox{ }k^2(\gamma \beta+\delta^2)^n=1.
\]
Thus, either $\alpha=-\delta$ and $k^2(\gamma \beta+\delta^2)^n=1$, or $\alpha=\delta$, $\gamma=0$ and $k^2\delta^{2n}=1$.
\end{itemize}

Putting (i) and (ii) together, we get the following result, which allows to drop the total number of parameters, and therefore of unknowns in the polynomial system ${\mathcal S}$. 

\begin{theorem}\label{fund3}
Let $S$ be a rational ruled surface properly parametrized as in Eq. \eqref{surf}, which is not doubly ruled. Let $\bfq(t)=(q_{1}(t),q_{2}(t),q_{3}(t))$, with $q_{i}(t)\in {\RR}[t]$ for $i=1,2,3$, and \[n=\mbox{max}\{\deg(q_{1}(t)),\deg(q_{2}(t)),\deg(q_{3}(t))\}.\]Finally, let $f({\bf x})=\bfA {\bf x}+\bfb$, with $\bfA\in {\Bbb R}^3$ orthogonal, $\bfb\in{\Bbb R}^3$, be an involution leaving $S$ invariant, and let $\varphi:\RR^2\to \RR^2$ be the birational transformation making the diagram in Eq. \eqref{eq:fundamentaldiagram} commutative. Then $\varphi(t,s)$ is as in Eq. \eqref{phi-funct-2}, with $\psi(t)$ as in Eq. \eqref{eq:Moebius}, and: 
\begin{itemize}
\item [(I)] $\alpha=-\delta$, $k^2(\gamma \beta+\delta^2)^n=1$, or
\item [(II)] $\varphi(t,s)=(t,-s+c(t))$, with $c(t)$ a rational function. 
\end{itemize}
\end{theorem}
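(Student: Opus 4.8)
The plan is to reduce the involution condition on $f$ to an involution condition on the associated Cremona transformation $\varphi$, and then to read off the constraints on the parameters by comparing coefficients. First I would apply Theorem \ref{fund2} with $S_1=S_2=S$ (so $\bfx_1=\bfx_2=\bfx$): this guarantees that $\varphi$ already has the form of Eq. \eqref{phi-funct-2}, namely $\varphi(t,s)=(\psi(t),\,k(\gamma t+\delta)^n s+c(t))$ with $\psi$ a M\"obius transformation. Since the ``$\Rightarrow$'' part of Theorem \ref{th-fundam} gives $\varphi=\bfx^{-1}\circ f\circ\bfx$, composing yields $\varphi\circ\varphi=\bfx^{-1}\circ(f\circ f)\circ\bfx$ as birational maps, so the hypothesis $f\circ f=\mathrm{id}_{\RR^3}$ is exactly equivalent to $\varphi\circ\varphi=\mathrm{id}_{\RR^2}$. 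This is the step that turns the geometric hypothesis into an algebraic one.

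Next I would write out $\varphi\circ\varphi=\mathrm{id}$ coordinate by coordinate. The first coordinate reduces to $\psi\circ\psi=\mathrm{id}$; cross-multiplying $(\psi\circ\psi)(t)=t$ and equating the coefficients of $t^2$, $t$ and $1$ forces $\gamma(\alpha+\delta)=0$, $\alpha^2=\delta^2$ and $\beta(\alpha+\delta)=0$, so that either $\alpha=-\delta$, or else $\alpha=\delta$ with $\beta=\gamma=0$. For the second coordinate I would substitute the explicit $\varphi_2$ into $\varphi_2(\psi(t),\varphi_2(t,s))=s$; after simplifying $(\gamma\psi(t)+\delta)^n(\gamma t+\delta)^n=[\gamma(\alpha+\delta)t+(\gamma\beta+\delta^2)]^n$, the coefficient of $s$ yields the polynomial identity $k^2[\gamma(\alpha+\delta)t+(\gamma\beta+\delta^2)]^n=1$, whence $\gamma(\alpha+\delta)=0$ and $k^2(\gamma\beta+\delta^2)^n=1$.

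Finally I would merge the two branches. If $\alpha=-\delta$, the second-coordinate condition is precisely $k^2(\gamma\beta+\delta^2)^n=1$, which is alternative (I). If instead $\alpha=\delta$ and $\beta=\gamma=0$, then $\psi(t)=t$ and $k(\gamma t+\delta)^n=k\delta^n$, while the constraint collapses to $k^2\delta^{2n}=1$, i.e. $k\delta^n=\pm 1$. The value $k\delta^n=+1$ would give $\varphi_2(t,s)=s+c(t)$, and re-imposing the $s$-free part of $\varphi\circ\varphi=\mathrm{id}$ would force $2c(t)=0$, hence $\varphi=\mathrm{id}$, contradicting the non-triviality of the symmetry; thus $k\delta^n=-1$ and $\varphi(t,s)=(t,-s+c(t))$, which is alternative (II).

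The delicate point I expect is the very first reduction: because $\varphi$ is only birational, the identities $\varphi\circ\varphi=\mathrm{id}$ and $f\circ f=\mathrm{id}$ hold only on dense open sets, so one must check that passing through $\bfx$ and $\bfx^{-1}$ introduces no base-point pathology that would break the equivalence. Once this is secured, the remainder is a routine comparison of rational-function coefficients, the only genuine subtlety being the elimination of the spurious solution $k\delta^n=+1$ by appealing to the non-triviality of the symmetry.
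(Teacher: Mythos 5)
Your proposal is correct and follows essentially the same route as the paper: reduce $f\circ f=\mathrm{id}_{\RR^3}$ to $\varphi\circ\varphi=\mathrm{id}_{\RR^2}$ via Theorem \ref{th-fundam} and Eq. \eqref{fundam-eq}, extract $\gamma(\alpha+\delta)=0$, $\alpha^2=\delta^2$, $\beta(\alpha+\delta)=0$ from $\psi\circ\psi=\mathrm{id}$, and obtain $k^2[\gamma(\alpha+\delta)t+(\gamma\beta+\delta^2)]^n=1$ from the coefficient of $s$ in the second coordinate, then merge the branches. Your explicit elimination of the spurious branch $k\delta^n=+1$ (forcing $c\equiv 0$, hence $\varphi=\mathrm{id}$, excluded by non-triviality) is slightly more detailed than the paper's terse ``putting (i) and (ii) together,'' but the argument is the same.
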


\noindent Observe that in case (II) $f$ fixes each line of the ruling, and acts on these lines as an affine involution.  


\begin{remark} \label{similarities}
Since any similarity can be written as $f({\bf x})=\lambda \bfQ +\bfb$, where $\lambda\neq 0$ is the scaling constant, taking norms in Eq. \eqref{at}, with $k$ a constant, we reach the condition
\begin{equation}\label{eq-compu-2}
\lambda^2\Vert \bfq_1(t)\Vert^2-k^2\cdot (\gamma t+\delta)^{2n_2}\cdot \Vert \bfq_2(\psi(t))\Vert^2=0.
\end{equation} 
The analysis in this case is very similar to that of isometries, although the polynomial system has one more variable, namely $\lambda$. 
\end{remark}

\subsection{Two examples} \label{detailed}

We illustrate the previous ideas in the following examples, one corresponding to the case $r=3$, and the other one to the case $r=2$. 

\vspace{0.3 cm}
\noindent {\bf Example 1.} Let $S_1$ and $S_2$ be the rational ruled surfaces parametrized by $\pmb{x_1}(t,s)=\pmb{p_1}(t)+s\cdot \pmb{q_1}(t)$ and $\pmb{x_2}(t,s)=\pmb{p_2}(t)+s\cdot \pmb{q_2}(t)$ respectively, where
\[
\begin{array}{l}
\bfp_1(t)= (t^4+t^2+t,t^6+t^3,t^5+t^3+t^2+3t),\\
\bfq_1(t)=(t^3+t,t^5,t^4+t^2+3),\\
\bfp_2(t)=(5t^4+5t^2+5t-1,3t^5+3t^3+3t^2+9t+5,-t^6+t^4-t^3+t^2+t),\\
\bfq_2(t)=\left(5t^3+5t,3t^4+3t^2+9,-t^5+t^3+t\right).
\end{array}
\]
In this case, $n=5$. Furthermore, when we write $\bfq_1(t)$ as in Eq. \eqref{qcomponents}, we observe that we fall in the case $r=3$. The surfaces $S_1,S_2$ are shown in Fig. 1.

We consider first the symmetries of $S_1$. If we directly apply Algorithm \ref{equiv-alg}, with $S_1=S_2$, the solubility of the linear system derived from Eq. \eqref{eqmain} in the entries of the matrix $\bfA$ yields, after factoring out $k$, the three following conditions on the coefficients $\alpha, \beta, \gamma, \delta:$ 

\begin{itemize}
	\item [(a)] $2\alpha^3\delta\gamma -3\alpha^2\beta\delta^2+3\alpha^2\beta\gamma^2-6\alpha\beta^2\delta\gamma -4\alpha\delta^3\gamma +4\alpha\delta\gamma^3-\beta^3\gamma^2-6\beta\delta^2\gamma^2+\beta\gamma^4=0.$
	
	\item [(b)] $5\alpha^4\beta -10\alpha^2\beta^3=0.$
	
	\item [(c)] $\alpha^4\delta +4\alpha^3\beta\gamma -6\alpha^2\beta^2\delta -\alpha^2\delta^3+3\alpha^2\delta\gamma^2-4\alpha\beta^3\gamma -6\alpha\beta\delta^2\gamma +2\alpha\beta\gamma^3-3\beta^2\delta\gamma^2-30\delta^3\gamma^2+15\delta\gamma^4=0.$
\end{itemize}

Since $\alpha\delta-\beta\gamma\neq0$, we add the equation $(\alpha\delta-\beta\gamma)u-1=0$. Additionally, from Eq. \eqref{eq-compu} we get the following 11 equations of degree 10 in $\alpha, \beta, \gamma, \delta$ and degree 2 in $k$:

\begin{itemize}
\item $-\beta^{10}k^2-\beta^8\delta^2k^2-3\beta^6\delta^4k^2-9\beta^4\delta^6k^2-7\beta^2\delta^8k^2-9\delta^{10}k^2+9=0.$

\item $-10\alpha\beta^9k^2-8\alpha\beta^7\delta^2k^2-18\alpha\beta^5\delta^4k^2-36\alpha\beta^3\delta^6k^2-14\alpha\beta\delta^8k^2$$-2\beta^8\delta\gamma k^2\\-12\beta^6\delta^3\gamma k^2-54\beta^4\delta^5\gamma k^2-56\beta^2\delta^7\gamma k^2-90\delta^9\gamma k^2=0.$

\item $-45\alpha^2\beta^8k^2-28\alpha^2\beta^6\delta^2k^2-45\alpha^2\beta^4\delta^4k^2-54\alpha^2\beta^2\delta^6k^2-7\alpha^2\delta^8k^2\\-16\alpha\beta^7\delta\gamma k^2-72\alpha\beta^5\delta^3\gamma k^2-216\alpha\beta^3\delta^5\gamma k^2-112\alpha\beta\delta^7\gamma k^2-\beta^8\gamma^2k^2-18\beta^6\delta^2\gamma^2k^2-135\beta^4\delta^4\gamma^2k^2-196\beta^2\delta^6\gamma^2k^2-405\delta^8\gamma^2k^2+7=0.$

\item $-120\alpha^3\beta^7k^2-56\alpha^3\beta^5\delta^2k^2-60\alpha^3\beta^3\delta^4k^2-36\alpha^3\beta\delta^6k^2-56\alpha^2\beta^6\delta\gamma k^2$\\$-180\alpha^2\beta^4\delta^3\gamma k^2-324\alpha^2\beta^2\delta^5\gamma k^2-56\alpha^2\delta^7\gamma k^2-8\alpha\beta^7\gamma^2k^2$\\$-108\alpha\beta^5\delta^2\gamma^2k^2-540\alpha\beta^3\delta^4\gamma^2k^2-392\alpha\beta\delta^6\gamma^2k^2-12\beta^6\delta\gamma^3k^2$\\$-180\beta^4\delta^3\gamma^3k^2-392\beta^2\delta^5\gamma^3k^2-1080\delta^7\gamma^3k^2=0.$

\item $-210\alpha^4\beta^6k^2-70\alpha^4\beta^4\delta^2k^2-45\alpha^4\beta^2\delta^4k^2-9\alpha^4\delta^6k^2-112\alpha^3\beta^5\delta\gamma k^2\\-240\alpha^3\beta^3\delta^3\gamma k^2-216\alpha^3\beta\delta^5\gamma k^2-28\alpha^2\beta^6\gamma^2k^2-270\alpha^2\beta^4\delta^2\gamma^2k^2\\-810\alpha^2\beta^2\delta^4\gamma^2k^2-196\alpha^2\delta^6\gamma^2k^2-72\alpha\beta^5\delta\gamma^3k^2-720\alpha\beta^3\delta^3\gamma^3k^2\\-784\alpha\beta\delta^5\gamma^3k^2-3\beta^6\gamma^4k^2-135\beta^4\delta^2\gamma^4k^2-490\beta^2\delta^4\gamma^4k^2-1890\delta^6\gamma^4k^2+9=0.$

\item $-252\alpha^5\beta^5k^2-56\alpha^5\beta^3\delta^2k^2-18\alpha^5\beta\delta^4k^2-140\alpha^4\beta^4\delta\gamma k^2-180\alpha^4\beta^2\delta^3\gamma k^2-54\alpha^4\delta^5\gamma k^2-56\alpha^3\beta^5\gamma^2k^2-360\alpha^3\beta^3\delta^2\gamma^2k^2-540\alpha^3\beta\delta^4\gamma^2k^2$\\$-180\alpha^2\beta^4\delta\gamma^3k^2-1080\alpha^2\beta^2\delta^3\gamma^3k^2-392\alpha^2\delta^5\gamma^3k^2-18\alpha\beta^5\gamma^4k^2$\\$-540\alpha\beta^3\delta^2\gamma^4k^2-980\alpha\beta\delta^4\gamma^4k^2-54\beta^4\delta\gamma^5k^2-392\beta^2\delta^3\gamma^5k^2$\\$-2268\delta^5\gamma^5k^2=0.$

\item $-210\alpha^6\beta^4k^2-28\alpha^6\beta^2\delta^2k^2-3\alpha^6\delta^4k^2-112\alpha^5\beta^3\delta\gamma k^2-72\alpha^5\beta\delta^3\gamma k^2-70\alpha^4\beta^4\gamma^2k^2-270\alpha^4\beta^2\delta^2\gamma^2k^2-135\alpha^4\delta^4\gamma^2k^2-240\alpha^3\beta^3\delta\gamma^3k^2$\\$-720\alpha^3\beta\delta^3\gamma^3k^2-45\alpha^2\beta^4\gamma^4k^2-810\alpha^2\beta^2\delta^2\gamma^4k^2-490\alpha^2\delta^4\gamma^4k^2\\-216\alpha\beta^3\delta\gamma^5k^2-784\alpha\beta\delta^3\gamma^5k^2-9\beta^4\gamma^6k^2-196\beta^2\delta^2\gamma^6k^2-1890\delta^4\gamma^6k^2+3=0.$

\item $-120\alpha^7\beta^3k^2-8\alpha^7\beta\delta^2k^2-56\alpha^6\beta^2\delta\gamma k^2-12\alpha^6\delta^3\gamma k^2-56\alpha^5\beta^3\gamma^2k^2\\-108\alpha^5\beta\delta^2\gamma^2k^2-180\alpha^4\beta^2\delta\gamma^3k^2-180\alpha^4\delta^3\gamma^3k^2-60\alpha^3\beta^3\gamma^4k^2$\\$-540\alpha^3\beta\delta^2\gamma^4k^2-324\alpha^2\beta^2\delta\gamma^5k^2-392\alpha^2\delta^3\gamma^5k^2-36\alpha\beta^3\gamma^6k^2$\\$-392\alpha\beta\delta^2\gamma^6k^2-56\beta^2\delta\gamma^7k^2-1080\delta^3\gamma^7k^2=0.$

\item $-45\alpha^8\beta^2k^2-\alpha^8\delta^2k^2-16\alpha^7\beta\delta\gamma k^2-28\alpha^6\beta^2\gamma^2k^2-18\alpha^6\delta^2\gamma^2k^2\\-72\alpha^5\beta\delta\gamma^3k^2-45\alpha^4\beta^2\gamma^4k^2-135\alpha^4\delta^2\gamma^4k^2-216\alpha^3\beta\delta\gamma^5k^2-54\alpha^2\beta^2\gamma^6k^2-196\alpha^2\delta^2\gamma^6k^2-112\alpha\beta\delta\gamma^7k^2-7\beta^2\gamma^8k^2-405\delta^2\gamma^8k^2+1=0.$

\item $-10\alpha^9\beta k^2-2\alpha^8\delta\gamma k^2-8\alpha^7\beta\gamma^2k^2-12\alpha^6\delta\gamma^3k^2-18\alpha^5\beta\gamma^4k^2-54\alpha^4\delta\gamma^5k^2-36\alpha^3\beta\gamma^6k^2-56\alpha^2\delta\gamma^7k^2-14\alpha\beta\gamma^8k^2-90\delta\gamma^9k^2=0.$

\item $-\alpha^{10}k^2-\alpha^8\gamma^2k^2-3\alpha^6\gamma^4k^2-9\alpha^4\gamma^6k^2-7\alpha^2\gamma^8k^2-9\gamma^{10}k^2+1=0.$
\end{itemize}

The union of the above 11 conditions plus the three conditions (a), (b), (c), with the additional condition $(\alpha\delta-\beta\gamma)u-1=0$, provides four tentative solutions for $\alpha,\beta,\gamma,\delta,k$, corresponding to

\begin{equation}\label{onlysol}
\{\alpha = \pm 1, \beta = 0,\gamma=0, \delta=1, k = \pm 1\}.
\end{equation}

Then we compute (numeric) values for $\bfA,\bfb$ and explicit forms for $c(t)$. Finally, using Eq. \eqref{fundam-eq} as a test, we get one nontrivial symmetry for $S_1$, corresponding to $\{\alpha=- 1, \beta=0, \delta=1, \gamma=0,k=1\}$, with $\varphi(t,s)=(-t,s+2t)$. The symmetry is defined by $f_0({\bf x})=\bfA_0 {\bf x}+\bfb_0$, where
	\begin{equation}\label{Q-example-1}
	\begin{array}{cc}\bfA_0=\left( \begin{array}{ccr}
		-1 & 0 & 0\\		
		0 & -1 & 0 \\
		0 & 0 & 1\\
		\end{array}\right), &
		\bfb_0=\left( \begin{array}{ccc}
		0 &		
	    0 &
		0
		\end{array}\right)^T,
		\end{array}
		\end{equation}
so $S_1$ is symmetric with respect to the $z$-axis. The total time of computation here is $3.744$ seconds. A similar timing, namely $4.602$ seconds, is obtained if, instead of using the above 11 conditions, one considers the polynomial system consisting of (a), (b), (c) and the six conditions (of degree 10) derived from the fact that $\bfA$ is orthogonal, after expressing the entries of $\bfA$ in terms of $\alpha,\beta,\gamma,\delta$. In that case, one obtains tentative possibilities depending on $k$, namely $\{\alpha=\pm 1, \beta=0, \gamma=0, \delta=1, k=k\}$, so the corresponding expressions for $\bfb,c(t)$ depend on $k$ as well. Finally, using Eq. \eqref{fundam-eq}, the symmetry is computed.  

\vspace{0.3 cm}
In order to check whether or not $S_1,S_2$ are affinely equivalent, we apply {\tt Algorithm 1}. The solubility conditions of the linear systems derived from Eq. \eqref{eq} in the entries of the matrix $\bfA$ are:

 \begin{itemize}
 	\item $10\alpha^3\delta\gamma-15\alpha^2\beta\delta^2+15\alpha^2\beta\gamma^2-30\alpha\beta^2\delta\gamma-20\alpha\delta^3\gamma+20\alpha\delta\gamma^3-5\beta^3\gamma^2-30\beta\delta^2\gamma^2+5\beta\gamma^4=0.$
 	
 	\item $3\alpha^4\delta+12\alpha^3\beta\gamma-18\alpha^2\beta^2\delta-3\alpha^2\delta^3+9\alpha^2\delta\gamma^2-12\alpha\beta^3\gamma-18\alpha\beta\delta^2\gamma+6\alpha\beta\gamma^3-9\beta^2\delta\gamma^2-90\delta^3\gamma^2+45\delta\gamma^4=0.$
 	
 	\item $-5\alpha^4\beta+2\alpha^3\delta\gamma+10\alpha^2\beta^3-3\alpha^2\beta\delta^2+3\alpha^2\beta\gamma^2-6\alpha\beta^2\delta\gamma-4\alpha\delta^3\gamma+4\alpha\delta\gamma^3-\beta^3\gamma^2-6\beta\delta^2\gamma^2+\beta\gamma^4=0.$
 \end{itemize}
  
Since $\alpha\delta-\beta\gamma\neq0$, we add the equation $(\alpha\delta-\beta\gamma)u-1=0$, obtaining expressions for $\alpha,\beta,\gamma,\delta$ only depending on $k$; the same thing happens with $\bfb$ and $c(t)$. Finally, we get two $\varphi$s corresponding to affine equivalences, namely
\[
\varphi_1(t,s)=(t,s),\mbox{ }\varphi_2(t,s)=(-t,s+2t).
\]
The mapping $\varphi_1(t,s)$ corresponds to the affine mapping $f_1({\bf x})=\bfA_1 {\bf x}+\bfb_1$, where 
\begin{equation}\label{Q1-example}
	\begin{array}{cc}\bfA_1=\left( \begin{array}{ccr}
		5 & 0 & 0\\		
		0 & 0 & 3 \\
		1 & -1 & 0\\
		\end{array}\right), &
		\bfb_1=\left( \begin{array}{ccc}
		-1 &		
	    5 &
		0
		\end{array}\right)^T.
		\end{array}
		\end{equation}

The mapping $\varphi_2(t,s)$ corresponds to the affine mapping $f_2({\bf x})=\bfA_2 {\bf x}+\bfb_2$, where
\begin{equation}\label{Q-example}
	\begin{array}{cc}\bfA_2=\left( \begin{array}{ccr}
		-5 & 0 & 0\\		
		0 & 0 & 3 \\
		-1 & 1 & 0\\
		\end{array}\right), &
		\bfb_2=\left( \begin{array}{ccc}
		-1 &		
	    5 &
		0
		\end{array}\right)^T.
		\end{array}
		\end{equation}

Therefore, we conclude that $S_1,S_2$ are related by two affine mappings $f_1,f_2$. Notice that this result is coherent with the fact that $S_1$ has a non-trivial symmetry; in fact, one can check that $f_2=f_1\circ f_0$. The computation time was 5.179 seconds. 

\begin{figure}
\begin{center}
$$\begin{array}{cc}
\vspace{-1.5cm}\includegraphics[scale=0.25]{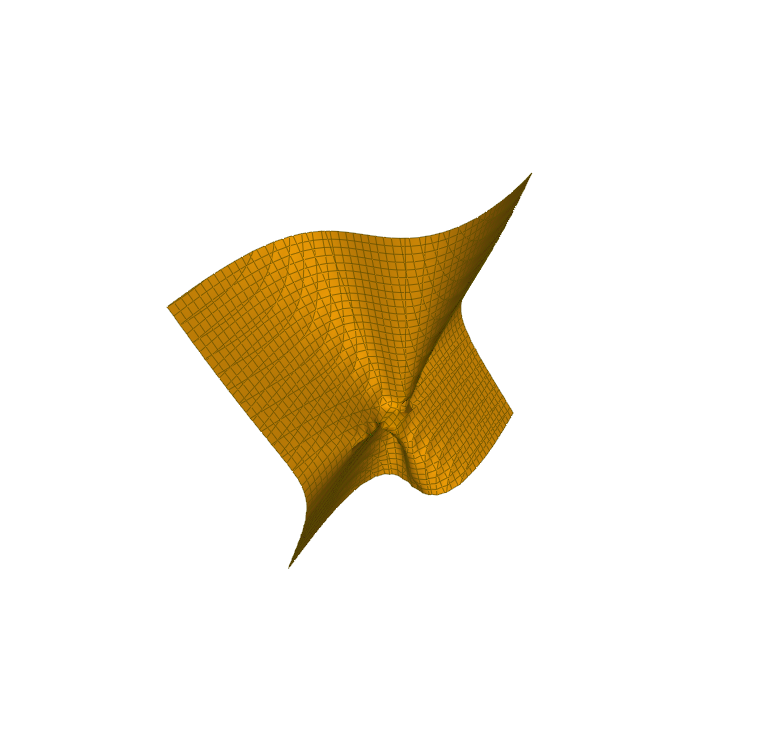} & \hspace{-1.8cm}\includegraphics[scale=0.25]{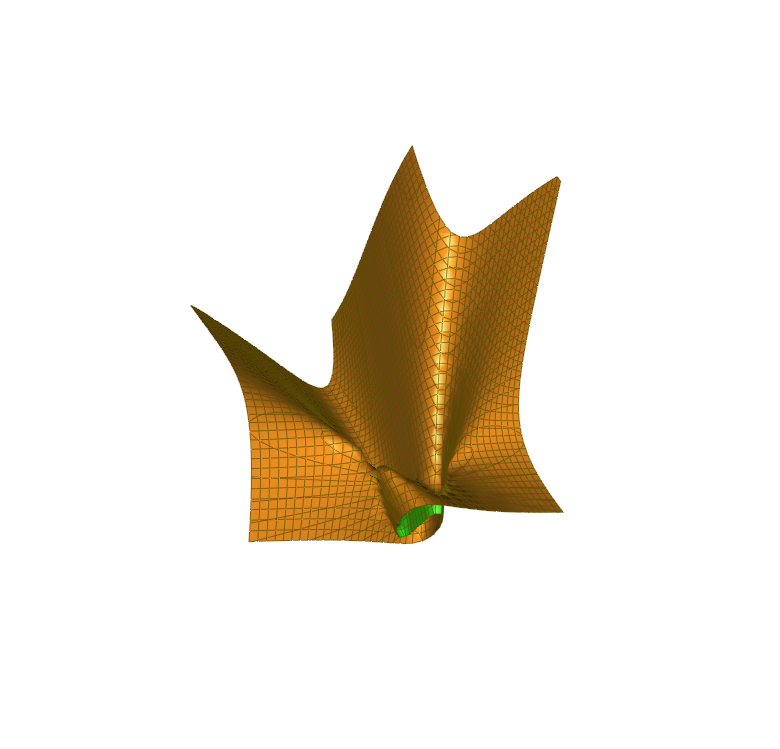}
\end{array}$$
\end{center}
\caption{$S_1$ (left) and $S_2$ (right).}\label{ej1}
\end{figure}
	
\vspace{0.4 cm}

\noindent{\bf Example 2.} Let $S_1$ and $S_2$ be the rational ruled surfaces parametrized by $\pmb{x_1}(t,s)=\pmb{p_1}(t)+s\cdot \pmb{q_1}(t)$ and $\pmb{x_2}(t,s)=\pmb{p_2}(t)+s\cdot \pmb{q_2}(t)$ respectively, where
\[
\begin{array}{l}
\bfp_1(t)= \left(t+\dfrac{3}{4},4t^2+3,t\right),\\ [0.4cm]
\bfq_1(t)=(t^3+2t^2+1,-t^3+t^2+t,-t^3+t^2+t),\\[0.2cm]
\bfp_2(t)=\left(\dfrac{\sqrt{3}+1}{2}t+\dfrac{3\sqrt{3}}{8}-\dfrac{1}{2},4t^2+5,\dfrac{\sqrt{3}-1}{2}t-\dfrac{\sqrt{3}}{2}-\dfrac{3}{8}\right),
\end{array}
\]
and
{\begin{multline*}	
\label{q2-example2}
				\bfq_2(t)=\left(\dfrac{\sqrt{3}-1}{2}t^3+\left(\sqrt{3}+\dfrac{1}{2}\right)t^2+\dfrac{t}{2}+\dfrac{\sqrt{3}}{2},-t^3+t^2+t,\right.\\\left.-\left(\dfrac{\sqrt{3}+1}{2}\right)t^3+\left(\dfrac{\sqrt{3}}{2}-1\right)t^2+\dfrac{\sqrt{3}}{2}t-\dfrac{1}{2}\right),
	\end{multline*}}

\noindent Here, $n=3$. Furthermore, when we write $\bfq_1(t)$ as in Eq. \eqref{qcomponents}, we observe that we are in the case $r=2$.

In this case, we analyze the isometries mapping $S_1$ onto $S_2$. There is only one isometry, associated with $\varphi(t,s)=(t,s)$, defined by $f({\bf x})=\bfA {\bf x}+\bfb$, where
	\begin{equation}\label{Q2-example-1}
	\begin{array}{cc}\bfA=\left( \begin{array}{ccr}
	\dfrac{\sqrt{3}}{2} & 0 & \dfrac{1}{2}\\		
	0 & 1 & 0 \\
	-\dfrac{1}{2} & 0 & \dfrac{\sqrt{3}}{2}\\
	\end{array}\right), &
	\bfb=\left( \begin{array}{ccc}
	-\dfrac{1}{2} &		
	2 &
	-\dfrac{\sqrt{3}}{2}
	\end{array}\right)^T,
	\end{array}
	\end{equation}
corresponding to a rotation of $\dfrac{\pi}{6}$ around the $y$-axis. In order to compute this isometry, applying Algorithm 1 with the additional equations corresponding to Eq. \eqref{eq-compu} we only need to test two tentative solutions, and the computation time is $3.588$ seconds. If, instead of Eq. \eqref{eq-compu}, we use the orthogonality conditions on the columns of the matrix $\bfA$, we need to test four tentative solutions, and the computation time is $4.696$ seconds.


\section{Experimentation and performance of the method.}\label{sec-exp}

We have implemented the method described in Section \ref{sec-symmetries} in the computer algebra system Maple 18, and we have tried several examples in an Intel(R) Core(TM) 2, Quad CPU Q6600, with 2.40 GHz and 4 Gb RAM; this is also the machine used in the examples of Subsection \ref{detailed}. We have analyzed both affine equivalences, and isometries. In the case of isometries, in the computation we include the conditions derived from Eq. \eqref{eq-compu}, since we observe that they highly speed up the computation. 

The results for affine equivalences of some representative examples are summarized in Tables 1 and 2. The surface $S_1$ is given by the parametrizacion $\tilde{\bfx}_i$ and $S_2$ is given by the parametrizacion $\tilde{\bfy}_i$, both shown in the table. When the surfaces are affinely equivalent, $\tilde{\bfy}_i$ is the result of applying on $\tilde{\bfx}_i$ an affine equivalence with matrix
$$\left( \begin{array}{ccr}
-1/2 & -1 & 0\\		
0 & 1 & 1 \\
0 & 2 & 3\\
\end{array}\right).$$
For each example, we have included: (1) a picture of the surface defined by $\tilde{\bfx}_i$ (or the surfaces defined by $\tilde{\bfx}_i,\tilde{\bfy}_i$, when they are not affinely equivalent); (2) the degree $N$ (``deg") of the parametrizations, i.e. the maximum power of $t$ appearing in the numerators and denominators of $\bfp_i(t),\bfq_i(t)$; (3) the computation time (in seconds) of the method for all the affine mappings, and the computation time using the implicit equation of the surface. 

The examples of Table 1 and Table 2 with more than one affine equivalence correspond to surfaces with symmetries. Furthermore, in some cases we identify infinitely many equivalences, implying that the surfaces are invariant under infinitely many affine mappings. In the column of timings, we highlight in red the worst time between our method, and the naive method mentioned in the Introduction using the implicit equation. This last timing does not include the time of computing the implicit equation, i.e. we assume that the implicit equation is already known. Only in one of the examples shown, where the implicit equation is very simple ($F(x,y,z)=x^3-27yz^2$) the method using the implicit equation is faster. 

The results for symmetries and isometries of several representative examples are summarized in Tables 3 and 4: for each example, we include the same data as in the affine equivalences table, plus the computation time (in seconds) of our method for computing all the symmetries of the surface given by $\bfx_i$ (``all sym."), for computing only the involutions of the surface (``involutions"), and for computing the isometries (``isometries") between each surface and its image under an orthogonal transformation with associated matrix 

\begin{equation}\label{MatrixIso}
\left( \begin{array}{ccr}
0 & 1 & 0\\		
4/5 & 0 & -3/5 \\
3/5 & 0 & 4/5\\
\end{array}\right).
\end{equation}

We also include the computation time using the implicit equation of the surface (``implicit"), assuming that this equation is available. Maple was able to provide an answer with this last method in less than 90 seconds in only two of the examples. In one of them, as we observed before, the implicit equation turned out to be very simple, which explains why the method using the implicit equation is faster. Finally, we include the type of symmetries found, too. In some cases, the symmetries detected are composites of rotations and reflections, denoted as ``rotation+reflection".

\newgeometry{left=3cm, right=3cm}
\begin{table}[h!]
	
	\begin{tabular*}{\columnwidth}{lccr}
		\toprule
		 Picture of $S_1$ & deg. & \hspace{1cm} computation time (secs.)/imp. & Affine equivalences \\
		\midrule
\raisebox{-3em}{\includegraphics[scale=0.21]{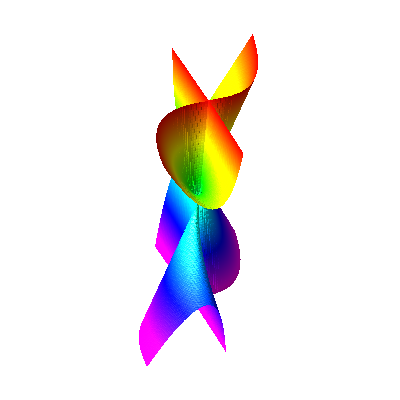}}	& 5 &\hspace{1cm} 2.309\,/\,\textcolor{red}{$> 90$} & 2 \\[0.3cm] 
		\multicolumn{4}{c}{\footnotesize$\tilde{\bfx}_1(t,s)=\left(\dfrac{t^2}{t^2+1},\dfrac{t^4}{t^2+1},\dfrac{t^5}{t^2+1}\right)+s\cdot(t,t^3,1)$}\\[0.3cm]
		\multicolumn{4}{c}{\footnotesize$\tilde{\bfy}_1(t,s)=\left(-\dfrac{1}{2}\dfrac{t^2(2t^2+1)}{t^2+1},\dfrac{t^4(t+1)}{t^2+1},\dfrac{t^4(3t+2)}{t^2+1}\right)+s\left(-\dfrac{1}{2}(2t^2+1)t,(t+1)(t^2-t+1),2t^3+3\right)$}\\
			\midrule
		\raisebox{-3em}{\includegraphics[scale=0.17]{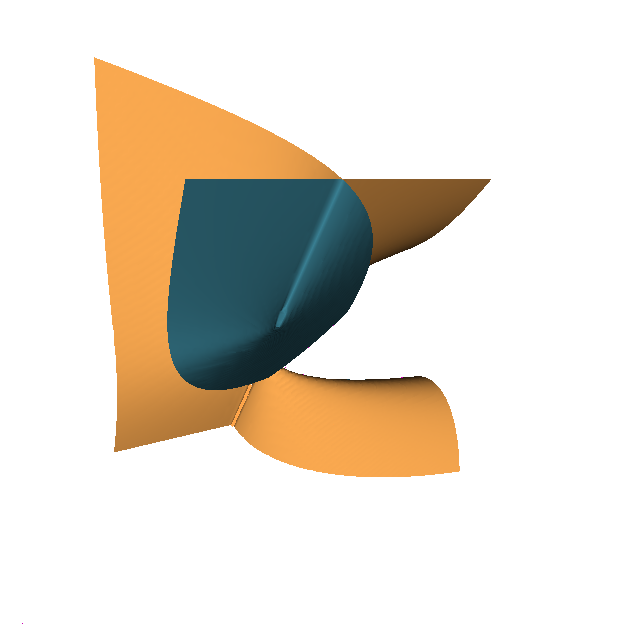}}	& 2 & \hspace{1cm} 16.676\,/\,\textcolor{red}{$> 90$} & $\infty$ \\[0.3cm] 
		\multicolumn{4}{c}{\footnotesize$\tilde{\bfx}_2(t,s)=(4,1,t)+s\cdot((t+1)^2,t+1,1)$}\\[0.2cm]
		\multicolumn{4}{c}{\footnotesize$\tilde{\bfy}_2(t,s)=\left(-3,t+1,3t+2\right)+s\left(-\dfrac{1}{2}(t+3)(t+1),t+2,2t+5\right)$}\\
		\midrule
	\raisebox{-3em}{\includegraphics[scale=0.18]{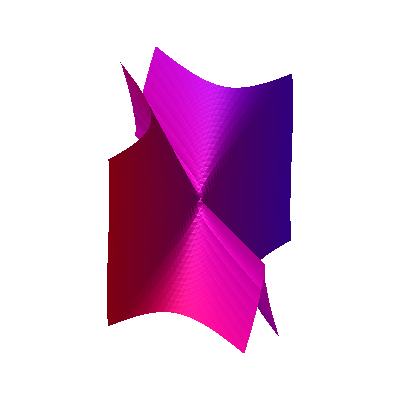}}	& 3 &\hspace{1cm} \textcolor{red}{2.870}\,/\,$0.687$ & $\infty$ \\[0.3cm] 
		\multicolumn{4}{c}{\footnotesize$\tilde{\bfx}_3(t,s)=s\cdot(3(t+1)^2(t-1),(t-1)^3,(t+1)^3)$}\\[0.2cm]
		\multicolumn{4}{c}{\footnotesize$\tilde{\bfy}_3(t,s)=s\left(-\frac{1}{2}(t-1)(5t^2+2t+5),2t(t^2+3),5t^3+3t^2+15t+1\right)$}\\
		\midrule
	\hspace{-0.5cm}\raisebox{-5em}{\includegraphics[scale=0.18]{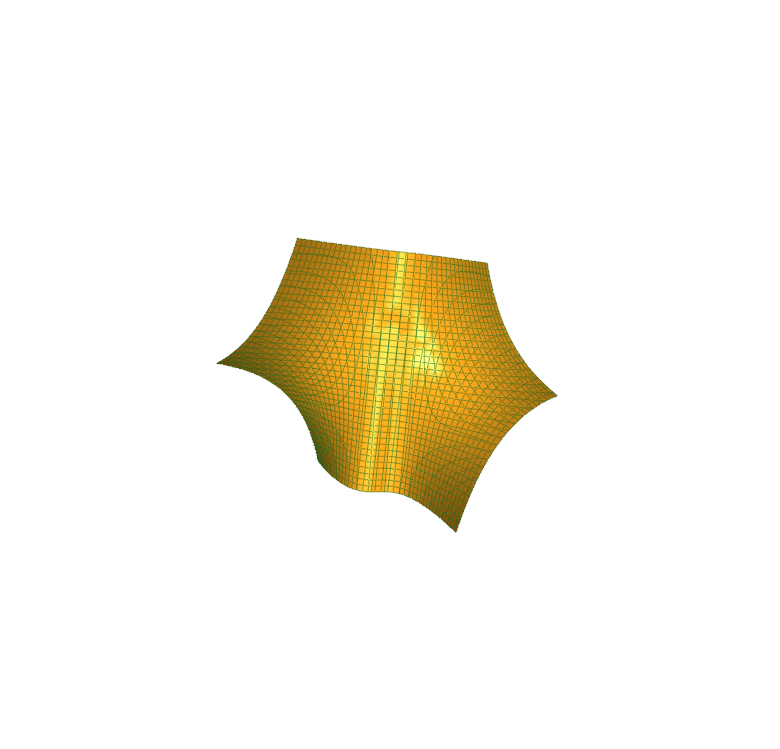}}	& 3 & \hspace{1cm} 3.167\,/\,\textcolor{red}{$> 90$} & 2 \\[-0.5cm] 
		\multicolumn{4}{c}{\footnotesize$\tilde{\bfx}_4(t,s)=(t^3+t,t^2-3,t^3+t)+s\cdot(-t^3,t^2-8,2t^3-3)$}\\[0.1cm]
		\multicolumn{4}{c}{\footnotesize$\tilde{\bfy}_4(t,s)=\left(\frac{t^3}{2}-\frac{t}{2}-t^2+3,t^3+t^2+t-3,3t^3+2t^2+3t-6\right)+s\left(\frac{t^3}{2}-t^2+8,2t^3+t^2-t-8,6t^3+2t^2-3t-16\right)$}\\
	
		\bottomrule
		
	\end{tabular*}
	\caption{}
\end{table}
\restoregeometry

\newgeometry{left=3cm, right=3cm}
\begin{table}[h!]
	
	\begin{tabular*}{\columnwidth}{lccc}
		\toprule
		Picture of $S_1$ & deg. &\hspace{-1cm} Computation time (secs.)/imp. & \hspace{-1cm} Affine equivalences \\
		\midrule
		
		\raisebox{-3em}{\includegraphics[scale=0.18]{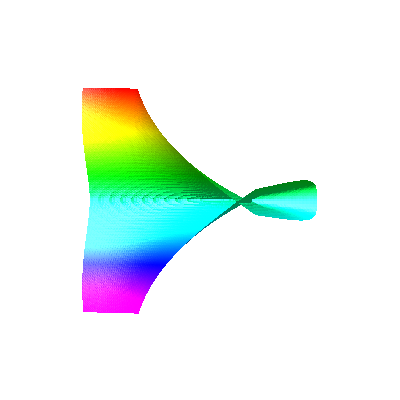}}	& 4 &  0.452\,/\,\textcolor{red}{$> 90$} &  2 \\[-0.5cm] 
		& \multicolumn{3}{l}{\footnotesize$\tilde{\bfx}_5(t,s)=(t^4+t^2,t^2-3,t^3+t)+s\cdot(-t^4+2,t^2-8,2t)$}\\
		[0.1cm]
		\multicolumn{4}{c}{\footnotesize$\tilde{\bfy}_5(t,s)=\left(-\frac{t^4}{2}-\frac{3t^2}{2}+3,t^3+t^2+t-3,3t^3+2t^2+3t-6\right)+s\left(\frac{t^4}{2}-t^2+7,t^2+2t-8,2t^2+6t-16\right)$}\\
		\midrule
		
	\hspace{-0.7cm}\raisebox{-8em}{\includegraphics[scale=0.18]{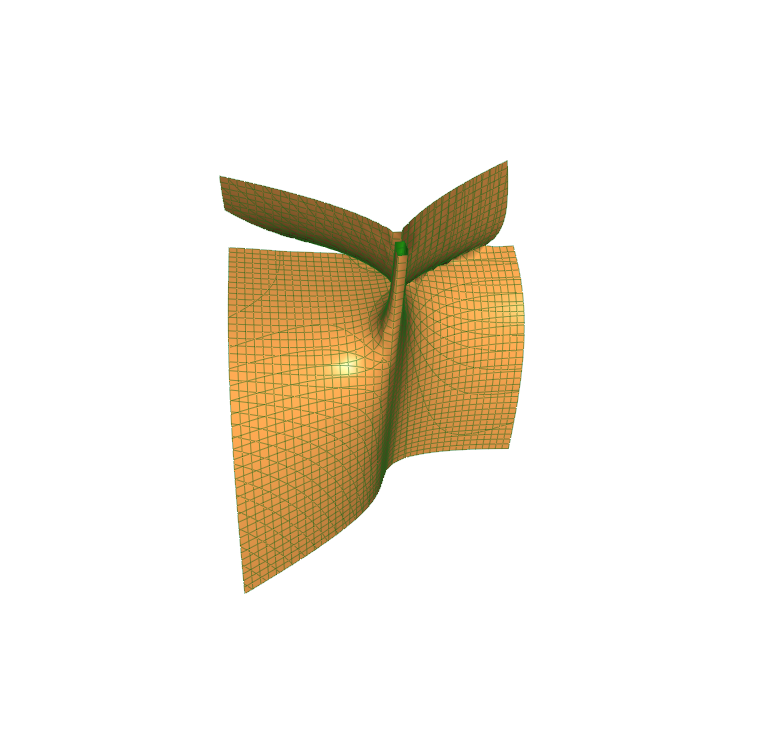}}	& 3 &  6.583\,/\,\textcolor{red}{$> 90$} & 1 \\[-2cm] 
		& \multicolumn{2}{c}{\footnotesize$\tilde{\bfx}_6(t,s)=\left(\frac{t^3+t}{t^2+1},-3,t+t^2\right)+s\cdot(-t^3,t^2-8,t^2-t)$} & \\[0.3cm]
		\multicolumn{4}{c}{\footnotesize$\tilde{\bfy}_6(t,s)=\left(3-\frac{t}{2},t^2+t-3,3t^2+3t-6\right)+s\cdot\left(-t^2+8+\frac{t^3}{3},2t^2-t-8,5t^2-3t-16\right)$}\\
		\midrule
		
		\vspace{0.4cm}\hspace{0.1cm}\raisebox{-5em}{\includegraphics[scale=0.17]{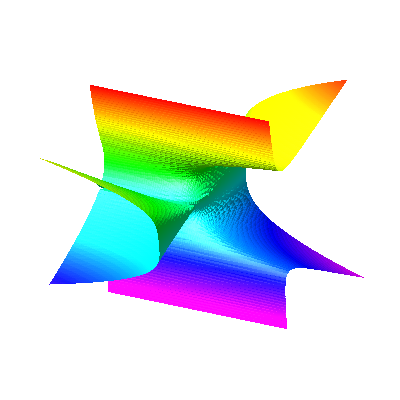}}	& 4 &  0.608\,/\,\textcolor{red}{$> 90$} & 2 \\[-1.3cm] 
		& \multicolumn{2}{c}{\footnotesize$\tilde{\bfx}_7(t,s)=\left(\frac{t^4+t^2}{t^2+3},\frac{t^2-3}{t^2+3},\frac{t^3+t}{t^2+3}\right)+s\cdot(-t^4+2,t^2-8,2t)$} & \\[0.3cm]
		\multicolumn{4}{c}{\footnotesize$\tilde{\bfy}_7(t,s)=\left(-\frac{1}{2}\cdot\frac{t^2(t^2+5)}{t^2+3},\frac{t^3+t^2+t-3}{t^2+3},\frac{3t^3+6t^2+3t+6}{t^2+3}\right)+s\cdot\left(\frac{1}{2}t^4-t^2+7,t^2+2t-8,2t^2+6t-16\right)$}\\
		
		\midrule
		 
		 \vspace{0.6cm}\hspace{-0.5cm}\raisebox{-5em}{\includegraphics[scale=0.18]{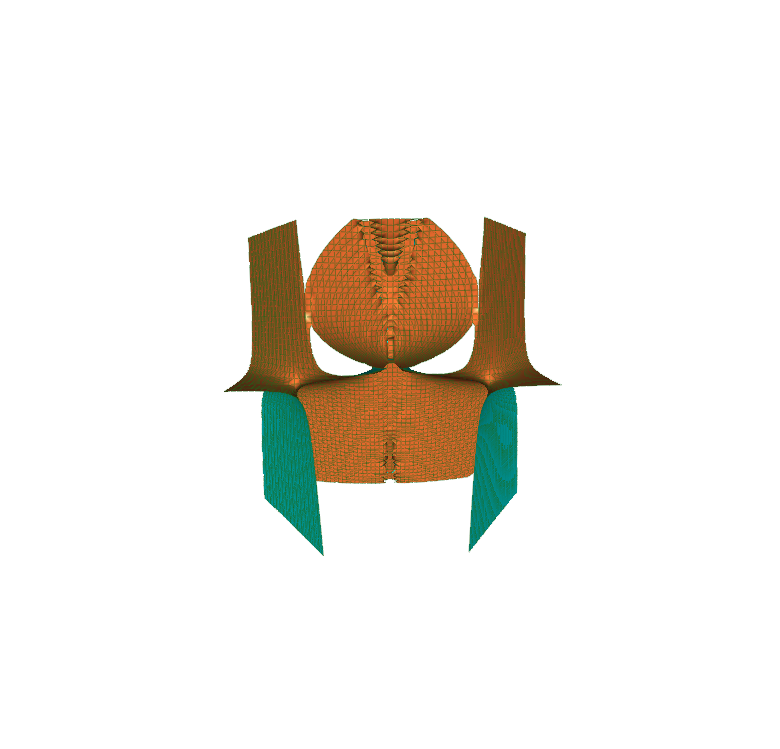}}	& 7 &  11.563 \,/\,\textcolor{red}{$> 90$} & 2 \\[-1.3cm] 
		 & \multicolumn{2}{c}{\footnotesize$\tilde{\bfx}_8(t,s)=\left(t^6-6t^4+t^2+2t,-t^7+6t^5-t^3+t^2+t,t^3+t\right)$} & \\
		 & \multicolumn{2}{c}{\footnotesize$+s\cdot\left(t^5-6t^3+t,-t^6+6t^4-t^2+1,t^2+1\right)$} & \\[0.3cm]
		 \multicolumn{4}{c}{\footnotesize$\tilde{\bfy}_8(t,s)=\left(t^3-\frac{3}{2}t^2-2t-\frac{1}{2}t^6+3t^4+t^7-6t^5,-t^7+6t^5+t^2+2t,-2t^7+12t^5+t^3+2t^2+5t\right)$} \\
		 \multicolumn{4}{c}{\footnotesize$+s\cdot\left(3t^3-\frac{1}{2}t+t^2-1+t^6-6t^4-\frac{1}{2}t^5,-t^6+6t^4+2,-2t^6+12t^4+t^2+5\right)$} \\
		 \midrule
		 
		\multirow{3}{*}{\includegraphics[scale=0.19]{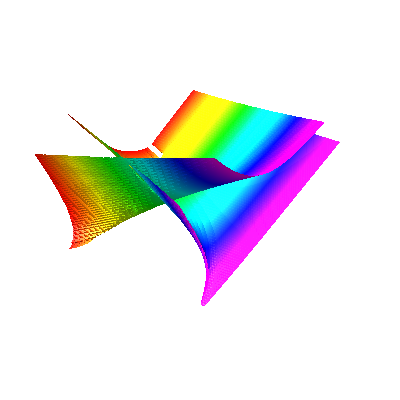} \raisebox{0.5em}{\hspace{-0.5cm}\includegraphics[scale=0.16]{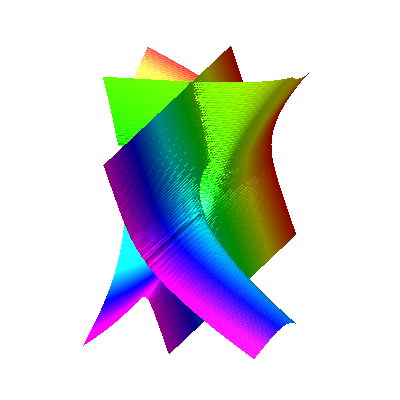}}} & & & \\ & & & \\
		& 4 & 0.469\,/\,\textcolor{red}{$7.800$} & 0 \\[1.5cm] 
		\hspace{0.9cm} $S_1$ \hspace{1.9cm} $S_2$ & \multicolumn{3}{l}{\footnotesize$\tilde{\bfx_9}(t,s)=\left(\frac{1}{t^4},t,1\right)+s\cdot(1,t,t^3)$}\\[0cm]
		\multicolumn{4}{c}{\hspace{0.2cm}\footnotesize$\tilde{\bfy_9}(t,s)=\left(-\frac{2}{t^4},4,3t\right)+s\left(\dfrac{1}{2},t^3+t,3\right)$}\\
		
		\bottomrule
		
	\end{tabular*}
	\caption{}
\end{table}
\restoregeometry

\newgeometry{left=3cm, right=3cm}

\begin{table}
	
	\begin{tabular*}{\columnwidth}{l@{\extracolsep{\stretch{1}}}*{5}{c}}
		\toprule
		parametrization & picture & deg. & \hspace{0.5cm} computation time (secs.) & symmetries \\
	    &  &  & \hspace{0.65cm}all sym.\,/\,involutions\,/ &and isometries \\
	    
	    &  &  & \hspace{0.65cm}isometries\,/ implicit & \\
		\midrule
		&\multirow{3}{*}{\includegraphics[scale=0.09]{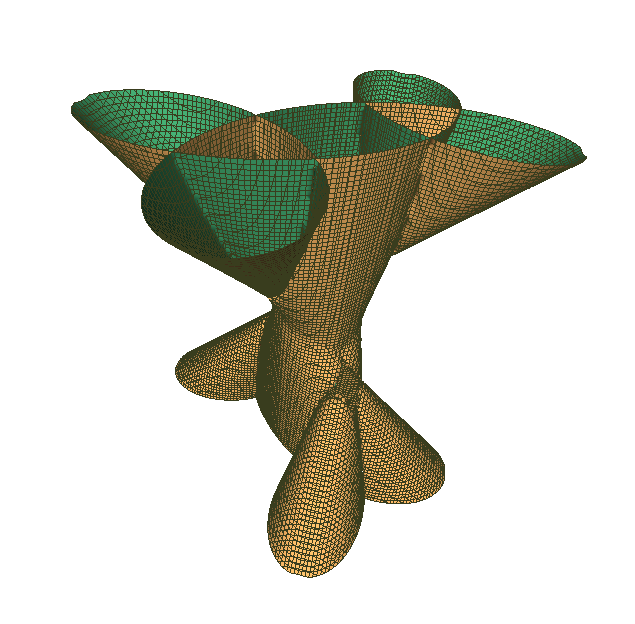}} & & &\\
		& & & & 3 axial \\
		$\bfx_1(t,s)$ & & 9 &\hspace{0.5cm} 9.640\,/\,7.410\,/ &  2 mirror\\ & & & \hspace{0.5cm} 9.267\,/\textcolor{red}{$>90$} & 2 rotational + reflect. \\& & & & 8 isometries \\[0.2cm]
		\multicolumn{5}{c}{\footnotesize{$\bfx_1(t,s)=\left( \frac{2t^8-10t^6-10t^4+5t^2+1}{t^2+1},-\frac{t^9-6t^7+6t^3+t^2-3t+1}{t^2+1},t^7+3t^5+3t^3+t+5\right)$}}\\
		\multicolumn{5}{c}{\footnotesize{$+s\cdot(2t(t^4-6t^2+1),-t^6+7t^4-7t^2+1,(t^2+1)^3)$}}\\
		\midrule
		$\bfx_2(t,s)$
		& \raisebox{-3em}{\includegraphics[scale=0.18]{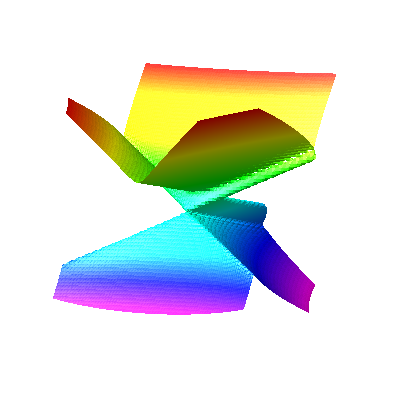}}
		& 7 & \hspace{0.5cm}\vspace{-0.9cm}1.981\,/\,1.812\,/ & 1 reflect.\\
		& & &\vspace{0.5cm}\hspace{0.5cm }1.996\,/\textcolor{red}{$>90$} & 2 isometries \\
		\multicolumn{5}{c}{\small{$\bfx_2(t,s)=\left( \dfrac{t^7+7t^5+3t^3-t^2-3t+1}{t^2+1},\dfrac{2t(4t^5+4t^3+1)}{t^2+1},t(t^2+1)^2\right)+s\cdot(-t^4-6t^2+3,8t^3,(t^2+1)^2)$}}\\
		\midrule
		$\bfx_3(t,s)$
		& \raisebox{-3em}{\includegraphics[scale=0.17]{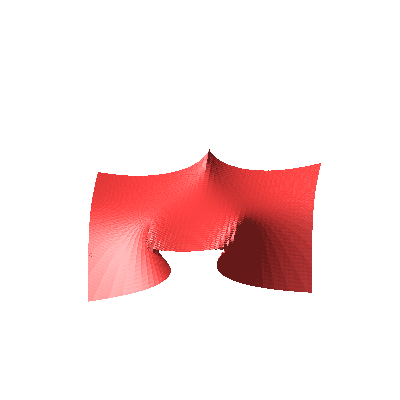}}
		& 7 & \hspace{0.7cm}\vspace{-0.9cm}1.888\,/\,1.778\,/ & 1 reflect. \\
		& & &\vspace{0.5cm} \hspace{0.5cm} 2.184 \,/\textcolor{red}{$>90$}& 2 isometries \\
		\multicolumn{5}{c}{\small{$\bfx_3(t,s)=(t^6-6t^4+t^2+2t,-t^7+6t^5-t^3+t^2+t,t^3+t)+s\cdot(t^5-6t^3+t,-t^6+6t^4-t^2+1,t^2+1)$}}\\
		\midrule
		$\bfx_4(t,s)$
		& \raisebox{-3em}{\includegraphics[scale=0.21]{reg8a}}			& 5 & \hspace{0.7cm}\vspace{-0.9cm}1.684\,/\,1.607\,/ & 1 reflect. \\
		& & &\vspace{0.5cm} \hspace{0.5cm} 2.043\,/\textcolor{red}{$>90$} & 2 isometries \\
		\multicolumn{5}{c}{$\bfx_4(t,s)=\left( \dfrac{t^2}{t^2+1},\dfrac{t^4}{t^2+1},\dfrac{t^5}{t^2+1}\right)+s\cdot(t,t^3,1)$}\\
		\midrule
		&\multirow{6}{*}{\includegraphics[scale=0.18]{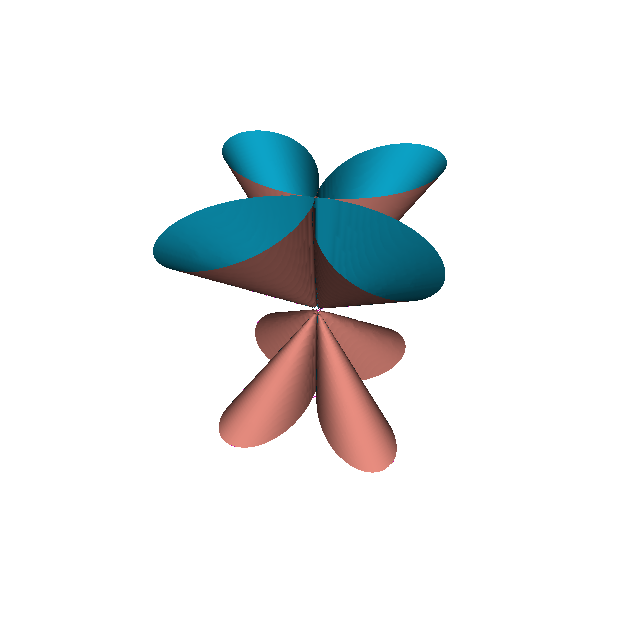}} & & & 5 reflect. \\
		& & & & 5 axial sym. \\ & & &\hspace{0.5cm} 4.587\,/\,3.291\,/ & 1 central \\
		$\bfx_5(t,s)$ & & 6 &\hspace{0.5cm} 10.280\,/\textcolor{red}{$>90$} & 2 rotational sym.  \\
		& & &  & 2 rotational +  reflect. \\  & & & & 16 isometries \\[0.2cm]
		\multicolumn{5}{c}{$\bfx_5(t,s)=s\cdot( 2t(t^4-6t^2+1),(-t^2+1)(t^4-6t^2+1),(t^2+1)^3)$}\\
		\bottomrule
		
	\end{tabular*}
	\caption{}
	\end{table}

\begin{table}
	
	\begin{tabular*}{\columnwidth}{cc@{\extracolsep{\stretch{1}}}cccc}
		\toprule
		parametrization & picture & deg. &\hspace{0.3cm} computation time (secs.) & symmetries\\
		&  &  & \hspace{0.5cm}all sym.\,/\,involutions & and isometries\\
		 &  &  & \hspace{0.65cm}isometries\,/implicit & \\
		\midrule
		$\bfx_6(t,s)$
		& \raisebox{-3em}{\includegraphics[scale=0.17]{reg16b}}			& 2 &\hspace{0.3cm} \vspace{-0.9cm}3.448\,/\,0.390\,/ & 1 axial sym. \\
		& & &\vspace{0.5cm}\hspace{0.3cm} 3.978\,/\textcolor{red}{$63.648$}& 2 isometries\\
		\multicolumn{5}{c}{$\bfx_6(t,s)=(4,1,t)+s\cdot((t+1)^2,t+1,1)$}\\
		\midrule
		& \multirow{6}{*}{\includegraphics[scale=0.18]{cubic}} & & &\\  & & & & central\\
		$\bfx_7(t,s)$ &	& 3 &\hspace{0.3cm} \textcolor{red}{1.451}\,/\,1.185\,/ &  1 reflection \\ & & &\hspace{0.2cm} 2.901\,/0.296& 1 axial sym. \\  & & & & 4 isometries\\ & & & &\\
		\multicolumn{5}{c}{$\bfx_7(t,s)=s\cdot(3(t+1)^2(t-1),(t-1)^3,(t+1)^3)$}\\
		\midrule
		$\bfx_8(t,s)$
		& \raisebox{-3em}{\includegraphics[scale=0.19]{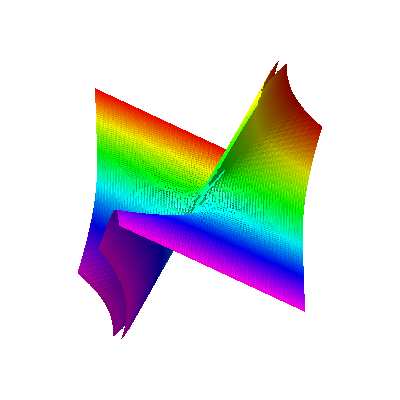}}
		& 7 &\hspace{0.3cm}\vspace{-0.9cm}\hspace{0.3cm} 1.935\,/\,1.809\,/ & central \\
		& & &\vspace{0.5cm}\hspace{0.5cm} 2.372\,/\textcolor{red}{$>90$}& 2 isometries \\
		\multicolumn{5}{c}{$\bfx_8(t,s)=\left( \dfrac{t^3}{t^2+1},\dfrac{t^5}{t^2+1},\dfrac{t^7}{t^2+1}\right)+s\cdot(-t^5+t,3t^7,-2t^3)$}\\	
		\midrule
		$\bfx_9(t,s)$
		& \raisebox{-3em}{\includegraphics[scale=0.18]{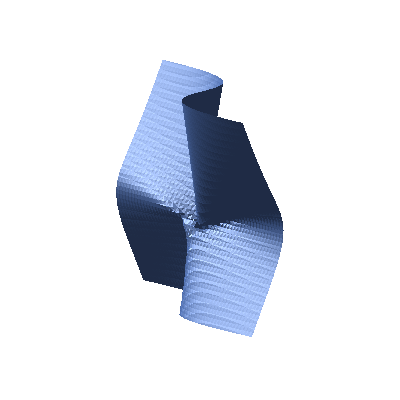}}
		& 6 &\hspace{0.3cm}\vspace{-0.9cm}\hspace{0.4cm} 1.716\,/\,1.653\,/ & 1 axial \\
		& & &\vspace{0.5cm}\hspace{0.7cm}2.200\,/\textcolor{red}{$>90$}& 2 isometries \\
		\multicolumn{5}{c}{$\bfx_9(t,s)=(t^4+t^2+t,t^6+t^3,t^5+t^3+t^2+3t)+s\cdot(t^3+t,t^5,t^4+t^2+3)$}\\
		\midrule
		&\multirow{3}{*}{\includegraphics[scale=0.15]{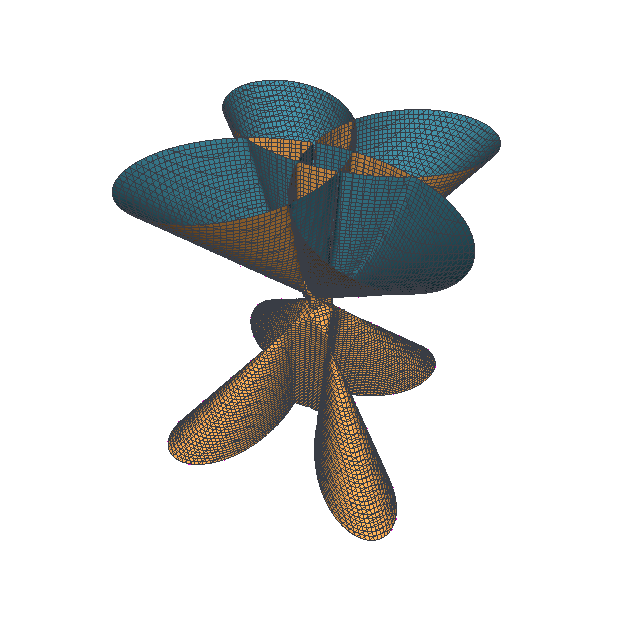}} & & &\\
		& & & & 4 reflect. \\
		$\bfx_{10}(t,s)$ & & 17 &\hspace{0.3cm}\hspace{0.3cm} 9.828\,/\,6.973\,/ &  1 axial sym.\\ & & &\hspace{0.5cm} 10.124\,/\textcolor{red}{$>90$} & 2 rotational sym. \\& & & &8 isometries \\[0.2cm]
 	\multicolumn{5}{c}{\footnotesize{$\bfx_{10}(t,s)=\left( -\frac{t^{17}-6t^{15}+6t^{11}-6t^7+6t^3-t^2-t+1}{t^2+1}, \frac{2t(t^{15}-5t^{13}-5t^{11}+t^9+t^7-5t^5-5t^3+t+1)}{t^2+1}, t(t^2+1)^3(t^8+1)\right) $}}\\
	\multicolumn{5}{c}{\footnotesize{$+s\cdot(-t^6+7t^4-7t^2+1,2t(t^4-6t^2+1),(t^2+1)^3)$}}\\	
		\bottomrule
		
	\end{tabular*}
	\caption{}
\end{table}

\restoregeometry

\section{Observations on the computation of projective equivalences.} \label{sec-projective}

Projective equivalences between $S_1,S_2$ correspond to rational mappings $f({\bf x})$ from ${\Bbb R}^3$ to ${\Bbb R}^3$ satisfying $f(S_1)=S_2$, where the components of $f$ have the form 
\begin{equation}\label{compo}
\frac{a_{i1}x+a_{i2}y+a_{i3}z+b_i}{a_{41}x+a_{42}y+a_{43}z+b_4},
\end{equation}
for $i=1,2,3$. Whenever $f$ is invertible, Theorem \ref{th-fundam} is also valid for this case, so each projective equivalence between $S_1,S_2$ has an associated mapping $\varphi(t,s)=(\varphi_1(t,s),\varphi_2(t,s))$ in the parameter space. Additionally, since projective mappings are collineations, i.e. they map lines to lines, we can argue as in the first part of 
Proposition \ref{fund} to conclude that $\varphi_1(t,s)=\psi(t)$, where $\psi(t)$ is a M\"obius transformation. However, the form of $\varphi_2(t,s)$ is not the same as in Proposition \ref{fund}, in general. Indeed, using Eq. \eqref{compo} one has that 
\[
\varphi_2(t,s)=\frac{\xi_1(t)+s\xi_2(t)}{\xi_3(t)+s\xi_4(t)},
\]
where the $\xi_j(t)$ are polynomials. As a consequence, the remaining results of Section \ref{sec-symmetries}, and in particular the form of $\varphi$ predicted by Theorem \ref{fund2}, cannot be easily generalized. Therefore, an approach analogous to the one in this paper for projective equivalences requires further work.

\section{Conclusion.} \label{sec-conclusion}

We have presented a unifying method to compute affine equivalences, isometries and symmetries of ruled rational surfaces, working directly on the rational parametric form. In order to do this, we reduce the problem to the parameter space, taking advantage of the fact that, under our hypotheses, these transformations have an associated birational transformation of the real plane whose structure can be predicted. From here, we observe that the matrix defining any affine equivalence (resp. isometry or symmetry) between the surfaces in fact defines a projective equivalence between two projective curves corresponding to the directions of the rulings of the surfaces. Thus, we take advantage of the ideas in \cite{HJ18}, where projective equivalences of curves in any dimension are considered, to solve our problem. In the case of isometries or symmetries, we have extra conditions coming from the fact that orthogonal transformations preserve norms. The algorithm is efficient, as shown in several nontrivial examples. For projective equivalences, we only provide a small hint on the form of the birational transformation of the plane behind such equivalences: giving a complete description, and generalizing the method to also cover these equivalences, requires more effort.  

\section*{References}

\end{document}